\newtheorem{theorem}{Theorem}[section]
\newtheorem{corollary}{Corollary}[section]
\theoremstyle{definition}
\newtheorem{remark}{Remark}[section]
\newcommand*{\Z}{\mathbb{Z}}
\newcommand*{\PP}{\mathbb{P}}
\def\id{\operatorname{id}}
\definecolor{webgreen}{rgb}{0,.5,0}
\numberwithin{equation}{section}
\begin{document}

\title{Inertia, positive definiteness and $\ell_p$ norm of GCD and LCM matrices and\\ their unitary analogs}

\author{Pentti Haukkanen\\
Faculty of Natural Sciences\\
FI-33014 University of Tampere, Finland\\
\\
L\'aszl\'o T\'oth\\
Department of Mathematics \\
University of P\'ecs,
Ifj\'us\'ag \'utja 6 \\
H-7624 P\'ecs, Hungary \\}
%\date{}
\maketitle

\begin{abstract}
Let $S=\{x_1,x_2,\dots,x_n\}$ be a set of distinct positive integers, and let $f$ be an arithmetical function. The GCD matrix $(S)_f$ on $S$ associated with $f$ is defined as
the $n\times n$ matrix having $f$ evaluated at the greatest common divisor of $x_i$ and $x_j$ as its $ij$ entry. The LCM matrix $[S]_f$ is defined similarly. We consider inertia, positive definiteness and $\ell_p$ norm of GCD and LCM matrices and their unitary analogs. Proofs are based on matrix factorizations and convolutions of arithmetical functions. 

2010 AMS Mathematics Subject Classification: Primary 11C20; secondary 11A25, 11N37, 15B36. 

Keywords and phrases: GCD type matrix, inertia, positive definiteness, matrix norm, arithmetical convolution, asymptotic formula.
\end{abstract}

\newpage

\tableofcontents

\section{Introduction}

Let $S=\{x_1,x_2,\dots,x_n\}$ be a set of distinct positive integers, and let $f$ be an arithmetical function. The GCD matrix $(S)_f$ on $S$ associated with $f$ is defined as
the $n\times n$ matrix having $f$ evaluated at the greatest common divisor of $x_i$ and $x_j$ as its $ij$ entry. The LCM matrix $[S]_f$ is defined similarly.
For $f(x)=x$ we obtain the usual GCD and LCM matrices $(S)$ and $[S]$ on $S$, and
for $f(x)=x^m$ we obtain the power GCD and LCM matrices on $S$.

H.J.S. Smith  calculated $\det (S)_f$ on factor-closed sets \cite[(5.)]{Sm} and
$\det [S]_f$ in a more special case \cite[(3.)]{Sm}.
There is a large number of generalizations and analogues of
these determinants in the literature.
For general accounts, see  \cite{HWS,SC}.
We assume that the reader is familiar with the
modern terminology of GCD and LCM matrices.

Various properties of GCD and LCM matrices and their analogues and generalizations are presented in the literature.
Since Smith, determinant, inverse and factorizations have been studied very extensively.
Lately, more attention is paid to eigenvalues, positive definiteness and norms.
Computational aspects are also brought forth to the agenda of the study of these type matrices
\cite{KDoc}.

In this paper we present some further results on inertia, positive definiteness and $\ell_p$ norm of GCD and LCM matrices and their unitary analogs.
The background material on unitary analogs is presented in Section \ref{se: pre}.
 Inertia means the numbers of positive, negative and zero eigenvalues and is explained in more detail in Section \ref{se: inertia}.
The study of inertia and positive definiteness utilizes the factorizations presented in Section  \ref{se: facto} and Sylvester's law of inertia.
The study of $\ell_p$ norm bases on convolutional methods, see Section~\ref{se: norms}.

\section{Preliminaries}\label{se: pre}

\subsection{The Dirichlet convolution}

The Dirichlet convolution of arithmetical functions $f$ and $g$ is defined as
$$
(f \star g)(n)= \sum_{d|n}f(d)g(n/d).
$$
The function $\delta$ (defined as $\delta(1)=1$ and $\delta(n)=0$ otherwise) serves as the identity under the Dirichlet convolution.
The M\"obius function $\mu$ is the inverse of the constant function $1$ under
the Dirichlet convolution.

Let $S=\{x_1,x_2,\dots,x_n\}$ be a set of distinct positive integers, and define
\begin{equation}
A_f(x_i)=\sum_{\substack{d\mid x_i\\d\nmid x_t\\t<i}} \phi_f(d)
\end{equation}
for all  $i = 1,2,\dots,n$, where
\begin{equation}
\phi_f(x) = \sum_{d\mid x}f(d)\mu(x/d)=(f\star\mu)(x).
\end{equation}
If $S$ is factor-closed, then $A_f(x_i)=\phi_f(x_i)$.
If $f(x)=x$ for all $x$, then $\phi_f=\phi$, Euler's totient function.
If $f(x)=x^m$ for all $x$, then $\phi_f=J_m$,
Jordan's totient function $J_m$.
General accounts on the Dirichlet convolution can be found in \cite{Mc,Si}.

\subsection{Unitary divisors and convolution}

A divisor $d\in\Z_+$ of $n\in\mathbb{Z_+}$ is said to be a unitary
divisor of $n$ and is denoted by $d\parallel n$ if $(d, n/d)=1$.
For example, the unitary divisors of $72$
$(=2^33^2)$ are $1, 8, 9, 72$.
If $d\parallel n$, we also say that $n$ is a unitary multiple of $d$.
The greatest common unitary divisor (GCUD) of $m$ and $n$ exists for all $m, n\in\Z_+$
but, unfortunately, the least common unitary multiple (LCUM) of $m$ and $n$ does not always exist.
For example, the LCUM of $2$ and $4$ does not exist.
The GCUD of $m$ and $n$ is denoted by $(m, n)^{\ast\ast}$ and the LCUM is denoted by  $[m, n]^{\ast\ast}$ when it exists.

Hansen and Swanson  \cite{HS} overcame the difficulty of the nonexistence of the LCUM by defining
\begin{equation}\label{eq: uulcm}
[m, n]^{\ast\ast}=\frac{mn}{(m, n)^{\ast\ast}}.
\end{equation}
It is easy to see that $mn/(m, n)^{\ast\ast}$ exists for all $m, n\in\Z_+$ and
is equal to the usual LCUM of $m$ and $n$ when the usual LCUM exists.
Therefore $[m, n]^{\ast\ast}$ in \eqref{eq: uulcm} is well-defined.
We say that $[m, n]^{\ast\ast}$
in \eqref{eq: uulcm} is the pseudo-LCUM of $m$ and $n$.
If the LCUM exists, then it is equal to the pseudo-LCUM.
There exist  also extensions of the LCUM other than the pseudo-LCUM
in the literature \cite{HINS}.

The unitary convolution of arithmetical functions $f$ and $g$ is defined as
$$
(f \oplus g)(n)= \sum_{d|n}f(d)g(n/d).
$$
The function $\delta$ also serves as the identity under the unitary convolution.
The unitary analog of the M\"obius function is the inverse of the constant function $1$ under
the unitary convolution and it is denoted by $\mu^\ast$.
The function $\mu^\ast$ is the multiplicative function such that
$\mu^\ast(p^k)=-1$ for all prime powers $p^k$ $(k\ge 1)$.

Let $S=\{x_1,x_2,\dots,x_n\}$ be a set of distinct positive integers, and define
\begin{equation}\label{eq: u1}
B_f^{\ast}(x_i)=\sum_{\substack{d\parallel x_i\\d\nparallel
x_t\\t<i}} \phi_f^{\ast}(d)
\end{equation}
for all  $i = 1,2,\dots,n$, where
\begin{equation}\label{eq: Jf}
\phi_f^{\ast}(x)
= \sum_{d\parallel x}f(d)\mu^*(x/d)
=(f\oplus\mu^*)(x).
\end{equation}
If $S$ is unitary divisor (UD) -closed, then $B_f^{\ast}(x_i)=\phi_f^{\ast}(x_i)$.
If $f(x)=x$ for all $x$, then $\phi_f^{\ast}=\phi^{\ast}$,
the unitary analog of Euler's totient function $\phi$, see \cite{C,Nag}.
If $f(x)=x^m$ for all $x$, then $\phi_f^{\ast}=J_m^{\ast}$,
the unitary analog of Jordan's totient function $J_m$, see \cite{Nag,ST}.

\subsection{Quasimultiplicative functions}

An arithmetical function $f$ is said to be quasimultiplicative
if $f(1)\neq 0$ and
\begin{equation}\label{eq: gam}
f(1)f(mn)=f(m)f(n)
\end{equation}
for all $m,n \in \Z_+$ with $(m,n)=1.$
A quasimultiplicative function $f$ is multiplicative if and only if $f(1)=1.$
An arithmetical function $f$ with $f(1)\neq 0$ is quasimultiplicative if and only if $f/f(1)$ is multiplicative.
Completely multiplicative functions are multiplicative functions satisfying
$f(mn)=f(m)f(n)$ for all $m,n \in \Z_+$.
General accounts on multiplicative functions are presented in \cite{Mc,Si}.

\subsection{GCD type matrices and their unitary analogs}

Let $S=\{x_1,x_2,\dots,x_n\}$ be a set of distinct positive integers, and let $f$ be an arithmetical function.
The GCD matrix $(S)_f$ and the LCM matrix $[S]_f$ are defined in Section~1.
Their unitary analogs go as follows.
The $n\times n$ matrix having
$f((x_i,x_j)^{\ast\ast})$ as its $ij$ entry is denoted as $(S^{\ast\ast})_f$,
and similarly the $n\times n$ matrix having
$f([x_i,x_j]^{\ast\ast})$ as its $ij$ entry is denoted as  $[S^{\ast\ast}]_f$.
We say that these matrices are  the GCUD and the pseudo-LCUM matrices on $S$
with respect to $f$.
For $f(x)=x$ we obtain the usual GCUD and pseudo-LCUM matrices
$(S^{\ast\ast})$ and $[S^{\ast\ast}]$ on $S$, and
for $f(x)=x^m$ we obtain the power GCUD and power-pseudo-LCUM matrices on $S$.
For general accounts on GCD type matrices see \cite{HWS,SC}.

\section{Factorizations} \label{se: facto}

We first review some factorizations presented in \cite{HaS}.

Let $S=\{x_1, x_2, \ldots, x_n\}$
be a GCD-closed set of distinct positive integers,
and let $f$ be any arithmetical function.
Then
\begin{equation}\label{eq: delta}
(S)_f=E\Delta E^T,
\end{equation}
where $E$ and $\Delta={\rm diag}(\delta_1, \delta_2,\ldots, \delta_n)$
are the $n\times n$ matrices defined by
$$
e_{ij}=
\begin{cases}
1\ {\rm if}\  \ x_j\,\vert\, x_i,\\
0\   {\rm otherwise},
\end{cases}
$$
and
$$
\delta_i=A_f(x_i)=\sum_{d\vert x_i\atop {d\,\not\>\vert\,x_t\atop x_t<x_i}}
(f\star\mu)(d).
$$
Further, if $f$ is a quasi-multiplicative function such that $f(x)\ne 0$ for all $x$,
then
\begin{equation}\label{eq: deltap}
[S]_f=\Lambda E\Delta^\prime E^T\Lambda,
\end{equation}
where $\Lambda$ and $\Delta'$ are the $n\times n$ diagonal matrices,  whose diagonal elements are
$\lambda_i=f(x_i)$ and
$$
\delta_i^\prime=A_{1/f}(x_i)=\sum_{d\vert x_i\atop {d\,\not\>\vert\,x_t\atop x_t<x_i}}
(\frac{1}{f}\star\mu)(d).
$$

Let $S=\{x_1,x_2,\ldots, x_n\}$ be a GCUD-closed set
of distinct positive integers, and
let $f$ be any arithmetical function.
Then
\begin{equation}\label{eq: gamma}
(S^{\ast\ast})_f=U\,\Gamma\,U^T,
\end{equation}
where $U$ and $\Gamma={\rm diag}(\gamma_1, \gamma_2,\ldots, \gamma_n)$
are the $n\times n$ matrices defined by
$$
u_{ij}=
\begin{cases}
1\ {\rm if}\  \ x_j\,\Vert\, x_i,\\
0\   {\rm otherwise},
\end{cases}
$$
and
$$
\gamma_i=B_f^{\ast}(x_i)=\sum_{d\Vert x_i\atop {d\,\not\>\Vert\,x_t\atop x_t<x_i}}
(f\oplus\mu^\ast)(d).
$$
Further, if  $f$ is a completely multiplicative function
such that $f(x)\ne 0$ for all $x$, then
\begin{equation}\label{eq: gammap}
[S^{\ast\ast}]_f=\Lambda\,U\,\Gamma^\prime\,U^T\,\Lambda,
\end{equation}
where $\Gamma^\prime$
is the $n\times n$ diagonal matrix defined by
$$
\gamma_i^\prime
=B_{1/f}^{\ast}(x_i)
=\sum_{d\Vert x_i\atop {d\,\not\>\Vert\,x_t\atop x_t<x_i}}
({1\over f}\oplus\mu^\ast)(d).
$$

The following factorizations may be considered well known \cite{BL93,BL95,KH}.

Let $S=\{x_1,x_2,\ldots, x_n\}$ be a set of distinct positive integers, and
let $S_{d}=\{w_1, w_2,\ldots, w_r\}$ be the set of all (positive) divisors of the elements of $S$ (that is, $S_{d}$ is the divisor closure of $S$).
Let $f$ be an  arithmetical function such that $(f\star\mu)(w_i)>0$ for all
$w_i\in S_{d}$.
Then
\begin{equation}\label{eq: N}
(S)_f=N N^T,
\end{equation}
where $N$ is the $n\times r$ matrix defined by
$$
n_{ij}=
\begin{cases}
\sqrt{(f\star\mu)(w_j)}\ \ {\rm if}\  \ w_j\,\vert\, x_i,\\
0\ \  {\rm otherwise}.
\end{cases}
$$
Let $f$ be a quasi-multiplicative function such that $f(w_i)\ne 0$ and
 $((1/f)\star\mu)(w_i)>0$ for all $w_i\in S_{d}$.
Then
\begin{equation}\label{eq: Np}
[S]_f=N' (N')^T,
\end{equation}
where $N'$ is the $n\times r$ matrix defined by
$$
n'_{ij}=
\begin{cases}
f(x_i)  \sqrt{((1/f)\star\mu)(w_j)}\ \ {\rm if}\  \ w_j\,\vert\, x_i,\\
0\ \  {\rm otherwise}.
\end{cases}
$$

We next present some new factorizations; although the ideas are well known \cite{KH}.

Let $S=\{x_1,x_2,\ldots, x_n\}$ be a set of distinct positive integers, and
let $S_{ud}=\{w_1, w_2,\ldots, w_r\}$ be the set of all unitary divisors of the elements of $S$ (that is, $S_{ud}$ is the unitary divisor closure of $S$).
Let $f$ be an  arithmetical function such that $(f\oplus\mu^\ast)(w_i)>0$ for all
$w_i\in S_{ud}$.
Then
\begin{equation}\label{eq: M}
(S^{\ast\ast})_f=M M^T,
\end{equation}
where $M$ is the $n\times r$ matrix defined by
$$
m_{ij}=
\begin{cases}
\sqrt{(f\oplus\mu^\ast)(w_j)}\ \ {\rm if}\  \ w_j\,\Vert\, x_i,\\
0\ \  {\rm otherwise}.
\end{cases}
$$
Let $f$ be a completely multiplicative function such that $f(w_i)\ne 0$ and
 $((1/f)\oplus\mu^\ast)(w_i)>0$ for all $w_i\in S_{ud}$.
Then
\begin{equation}\label{eq: Mp}
[S^{\ast\ast}]_f=M' (M')^T,
\end{equation}
where $M'$ is the $n\times r$ matrix defined by
$$
m'_{ij}=
\begin{cases}
f(x_i)  \sqrt{((1/f)\oplus\mu^\ast)(w_j)}\ \ {\rm if}\  \ w_j\,\Vert\, x_i,\\
0\ \  {\rm otherwise}.
\end{cases}
$$

\section{Inertia} \label{se: inertia}

The inertia of a Hermitean matrix $H$ is the triple $(i_{+}(H), i_{-}(H), i_0(H))$, where
$i_{+}(H)$,  $i_{-}(H)$ and $i_0(H)$ are the numbers of positive, negative and zero eigenvalues of the matrix $H$, counting multiplicities \cite{HJ}.
The factorizations in Section \ref{se: facto} make it possible to consider the inertias of the matrices $(S)_f$, $[S]_f$, $(S^{\ast\ast})_f$ and $[S^{\ast\ast}]_f$.
Inertia of MIN and MAX matrices is considered in \cite{MH2}.

\begin{theorem}\label{th: inertiamS}
Let $f(x)=x^m$, where $m>0$.
\begin{itemize}

\item[\rm(a)] If $S$ is any set of $n$ distinct positive integers, then

$i_{+}((S)_f)=n$, $i_{-}((S)_f)=i_{0}((S)_f)=0$.

\item[\rm(b)] If  $S$ is factor-closed, then

$i_{+}([S]_f)$ is the number of $x_i\in S$ such that $\omega(x_i)$ is even,

$i_{-}([S]_f)$ is the number of $x_i\in S$ such that $\omega(x_i)$ is odd,

$i_{0}([S]_f)=0$.

Here $\omega(x_i)$ is the number of distinct prime factors of $x_i$ with $\omega(1)=0$.

\end{itemize}

\end{theorem}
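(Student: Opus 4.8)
The plan is to read off the inertia from the factorizations of Section~\ref{se: facto} together with Sylvester's law of inertia, the point in each case being to pin down the sign of a suitable Dirichlet convolution. For part (a) I would use \eqref{eq: N}, which requires $(f\star\mu)(w_i)>0$ for every divisor $w_i$ of the elements of $S$. Since $f(x)=x^m$ gives $f\star\mu=\phi_f=J_m$, Jordan's totient, and $J_m(k)=k^m\prod_{p\mid k}(1-p^{-m})>0$ for all $k\ge 1$ when $m>0$, this hypothesis holds, so $(S)_f=NN^T$ is positive semidefinite and $i_-((S)_f)=0$. It then remains to see that $N$ has full row rank $n$. Ordering $S$ so that $x_1<x_2<\cdots<x_n$ (a relabelling that conjugates everything by a permutation matrix and so does not affect inertia), the columns of $N$ indexed by $w=x_1,\dots,x_n\in S_d$ form an $n\times n$ submatrix whose $(i',i)$ entry is $\sqrt{J_m(x_i)}$ when $x_i\mid x_{i'}$ and $0$ otherwise; since $x_i\mid x_{i'}$ forces $i\le i'$, this submatrix is lower triangular with nonzero diagonal entries $\sqrt{J_m(x_i)}$, hence invertible. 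Therefore $(S)_f=NN^T$ is positive definite, giving $i_+((S)_f)=n$ and $i_-((S)_f)=i_0((S)_f)=0$.

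For part (b), since $f(x)=x^m$ is completely multiplicative and nowhere zero, I would invoke \eqref{eq: deltap}, which reads $[S]_f=(\Lambda E)\,\Delta'\,(\Lambda E)^T$ with $\Lambda=\diag(x_1^m,\dots,x_n^m)$, $E$ the $(0,1)$ divisibility matrix of $S$, and $\Delta'=\diag(\delta_1',\dots,\delta_n')$. Again ordering $S$ increasingly makes $E$ unit lower triangular, hence invertible, and $\Lambda$ is invertible, so $\Lambda E$ is invertible; by Sylvester's law of inertia $[S]_f$ has the same inertia as $\Delta'$, namely the numbers of positive, negative and zero entries among the $\delta_i'$. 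Because $S$ is factor-closed we have $\delta_i'=A_{1/f}(x_i)=\phi_{1/f}(x_i)=\bigl((1/f)\star\mu\bigr)(x_i)$.

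The remaining and really only substantive point is the sign of $g:=(1/f)\star\mu$ at the $x_i$. The function $1/f$, i.e.\ $x\mapsto x^{-m}$, is multiplicative, hence so is $g$; at a prime power $p^a$ with $a\ge 1$ one computes $g(p^a)=p^{-am}-p^{-(a-1)m}=p^{-am}(1-p^m)<0$ (using $p^m>1$), while $g(1)=1$. Thus $g(x_i)$ is a product of $\omega(x_i)$ negative numbers, so $g(x_i)\ne 0$ and $\operatorname{sign} g(x_i)=(-1)^{\omega(x_i)}$. Consequently $i_0([S]_f)=0$, $i_+([S]_f)$ counts the $x_i$ with $\omega(x_i)$ even and $i_-([S]_f)$ counts those with $\omega(x_i)$ odd, as asserted. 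I do not expect a genuine obstacle here; the only care needed is the (harmless) reordering of $S$ used to make $E$ triangular and $N$ of full rank, and the elementary prime-power evaluation of $g$.
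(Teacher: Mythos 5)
Your proof is correct and follows essentially the same route as the paper: part (a) via the factorization \eqref{eq: N} together with the positivity of $J_m$, and part (b) via \eqref{eq: deltap}, Sylvester's law of inertia, and the sign of $J_{-m}$ at prime powers. You merely make explicit two details the paper leaves implicit (the full row rank of $N$ and the invertibility of $\Lambda E$), which is a welcome addition rather than a divergence.
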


\begin{proof}
(a)  In this case
$$
(f\star\mu)(w_i)=J_m(w_i)=\sum_{d\Vert w_i} d^m \mu(w_i/d).
$$
The values of Jordan's totient $J_m$ at prime powers are given as
$$
J_m(p^a)=p^{am}-p^{(a-1)m}>0,
$$
and since $J_m$ is multiplicative, all the values of $J_m$ are positive.
Thus we may  apply Equation \eqref{eq: N},
which shows that $(S)_f$ is positive definite.
Thus all eigenvalues are positive.

(b) Let $S$ be factor-closed. Factorization \eqref{eq: deltap} implies that $[S]_f$ is ${}^T$congruent with the diagonal matrix $\Delta^\prime$; hence it suffices to consider the diagonal matrix $\Delta^\prime$, whose diagonal elements in the case of factor-closed set  are
$$
A_{1/f}(x_i)
=((1/f)\star\mu)(x_i)
=J_{-m}(x_i)
=\sum_{d\vert x_i} d^{-m} \mu(x_i/d).
$$
The values of $J_{-m}$ at prime powers are given as
$$
J_{-m}(p^a)=p^{-am}-p^{-(a-1)m}<0.
$$
Since  in this case $J_{-m}$ is multiplicative, we conclude that
$J_{-m}(x_i)$ is positive if and only if $\omega(x_i)$ is even, and
$J_{-m}(x_i)$ is negative if and only if $\omega(x_i)$ is odd.
This completes the proof.
\end{proof}

\begin{remark}
Theorem \ref{th: inertiamS} holds for multiplicative functions $f$ with $f(p^a)>f(p^{a-1})$ for all prime powers $p^a>1$.
\end{remark}

\begin{theorem}\label{th: inertia-mS}
Let $f(x)=x^{-m}$, where $m>0$.
\begin{itemize}

\item[\rm(a)]  If $S$ is any set of $n$ distinct positive integers, then

$i_{+}([S]_f)=n$, $i_{-}([S]_f)=i_{0}([S]_f)=0$.

\item[\rm(b)]  If $S$ is factor-closed, then

$i_{+}((S)_f)$ is the number of $x_i\in S$ such that $\omega(x_i)$ is even,

$i_{-}((S)_f)$ is the number of $x_i\in S$ such that $\omega(x_i)$ is odd,

$i_{0}((S)_f)=0$.

\end{itemize}

\end{theorem}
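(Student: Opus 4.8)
The plan is to mirror the proof of Theorem \ref{th: inertiamS}, swapping the roles of the GCD and LCM matrices, since the hypothesis $f(x)=x^{-m}$ is exactly the reciprocal of $g(x)=x^m$.

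For part (a), the idea is to apply the LCM-matrix factorization \eqref{eq: Np}. Here $f(x)=x^{-m}$ is completely multiplicative, hence quasi-multiplicative, and $f(w_i)=w_i^{-m}\neq 0$ for all $w_i$ in the divisor closure $S_d$. It remains to check that $((1/f)\star\mu)(w_i)>0$ for all such $w_i$. Since $1/f$ is the function $x\mapsto x^m$, we get $((1/f)\star\mu)(w_i)=J_m(w_i)$, whose values at prime powers are $J_m(p^a)=p^{am}-p^{(a-1)m}>0$, and $J_m$ is multiplicative, so all its values are positive. Thus \eqref{eq: Np} applies and gives $[S]_f=N'(N')^T$, so $[S]_f$ is positive semidefinite. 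To conclude positive definiteness (i.e. $i_0=0$), I would invoke the standard fact that the $n\times r$ matrix $N'$ has rank $n$: its rows are indexed by the distinct elements $x_i$, and one can argue via the triangular structure obtained by restricting to the columns indexed by $w_j\in S_d$ that divide exactly one $x_i$ of minimal value (the usual Bourque--Ligh style argument), or equivalently observe that $[S]_f$ with $f$ completely multiplicative and positive is a Gram matrix of the linearly independent vectors associated to distinct $x_i$. Hence all $n$ eigenvalues are positive.

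For part (b), assume $S$ is factor-closed and use the GCD-matrix factorization \eqref{eq: delta}, which gives $(S)_f=E\Delta E^T$ with $E$ invertible (upper/lower triangular with $1$'s on the diagonal after suitable ordering), so $(S)_f$ is ${}^T$congruent with $\Delta$. By Sylvester's law of inertia it suffices to count the signs of the diagonal entries $\delta_i=A_f(x_i)$. Since $S$ is factor-closed, $A_f(x_i)=(f\star\mu)(x_i)$, and with $f(x)=x^{-m}$ this equals $J_{-m}(x_i)=\sum_{d\mid x_i}d^{-m}\mu(x_i/d)$. The prime-power values are $J_{-m}(p^a)=p^{-am}-p^{-(a-1)m}<0$, and $J_{-m}$ is multiplicative, so $J_{-m}(x_i)=\prod_{p^a\,\|\,x_i}J_{-m}(p^a)$ is a product of $\omega(x_i)$ negative factors (with the empty product $J_{-m}(1)=1>0$). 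Therefore $\delta_i>0$ exactly when $\omega(x_i)$ is even and $\delta_i<0$ exactly when $\omega(x_i)$ is odd, and no $\delta_i$ vanishes, which yields the claimed inertia and $i_0((S)_f)=0$.

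The main obstacle is the positive-definiteness (rather than merely semidefiniteness) claim in part (a): the factorization \eqref{eq: Np} only exhibits $[S]_f$ as $N'(N')^T$, so I must separately verify $\operatorname{rank} N'=n$. Since $f$ is completely multiplicative one has $n'_{ij}=f(x_i)\sqrt{((1/f)\star\mu)(w_j)}\,[w_j\mid x_i]$; dividing row $i$ by the nonzero scalar $f(x_i)$ reduces this to the incidence-type matrix with entries $\sqrt{J_m(w_j)}\,[w_j\mid x_i]$, whose rank is $n$ because the sets $\{w_j\in S_d: w_j\mid x_i\}$ are distinct for distinct $x_i$ (indeed $x_i$ itself lies in $S_d$ and divides $x_k$ iff $x_i\mid x_k$), giving a square invertible submatrix after reordering. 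This is the same argument that underlies the positive definiteness in Theorem \ref{th: inertiamS}(a), so it is routine but should be stated.
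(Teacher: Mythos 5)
Your proposal is correct and follows exactly the route the paper intends: the paper omits this proof, referring to the proof of Theorem \ref{th: inertiamS}, and that argument transposed (factorization \eqref{eq: Np} with $((1/f)\star\mu)=J_m>0$ for part (a); factorization \eqref{eq: delta}, Sylvester's law, and the sign of $J_{-m}(x_i)=(-1)^{\omega(x_i)}|J_{-m}(x_i)|$ for part (b)) is precisely what you wrote. Your extra care in verifying $\operatorname{rank}N'=n$ to upgrade positive semidefiniteness to positive definiteness is a point the paper glosses over, and your triangular-submatrix argument for it is the standard and correct one.
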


Proof is similar to that of Theorem \ref{th: inertiamS}, and we omit the details.

\begin{remark}
Theorem \ref{th: inertia-mS} holds for multiplicative functions $f$ with $0\ne f(p^a)<f(p^{a-1})$ for all prime powers $p^a>1$.
 \end{remark}

\begin{theorem}\label{th: inertiam}
Let  $f(x)=x^m$, where $m>0$.
\begin{itemize}

\item[\rm(a)]  If $S$ is  any set of $n$ distinct positive integers, then

$i_{+}((S^{\ast\ast})_f)=n$, $i_{-}((S^{\ast\ast})_f)=i_{0}((S^{\ast\ast})_f)=0$.

\item[\rm(b)]  If $S$ is  UD-closed, then

$i_{+}([S^{\ast\ast}]_f)$ is the number of $x_i\in S$ such that $\omega(x_i)$ is even,

$i_{-}([S^{\ast\ast}]_f)$ is the number of $x_i\in S$ such that $\omega(x_i)$ is odd,

$i_{0}([S^{\ast\ast}]_f)=0$.
\end{itemize}

\end{theorem}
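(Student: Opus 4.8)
The plan is to follow the proof of Theorem~\ref{th: inertiamS} almost verbatim, with the Dirichlet convolution, ordinary divisibility, and the factorizations \eqref{eq: N} and \eqref{eq: deltap} replaced throughout by the unitary convolution, unitary divisibility, and the factorizations \eqref{eq: M} and \eqref{eq: gammap}.

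For part~(a) I would first evaluate $\phi_f^\ast=f\oplus\mu^\ast=J_m^\ast$ at prime powers. The only unitary divisors of $p^a$ $(a\ge 1)$ are $1$ and $p^a$, and $\mu^\ast(1)=1$, $\mu^\ast(p^a)=-1$, so $J_m^\ast(p^a)=p^{am}-1>0$ for $m>0$. Since $J_m^\ast$, as a unitary convolution of multiplicative functions, is multiplicative, $(f\oplus\mu^\ast)(w_i)>0$ for every $w_i$ in the unitary divisor closure $S_{ud}$. Hence \eqref{eq: M} applies and $(S^{\ast\ast})_f=MM^T$. To obtain strict positive definiteness rather than mere positive semidefiniteness, I would order $S$ so that $x_1<\cdots<x_n$ and consider the $n\times n$ submatrix of $M$ whose columns are those indexed by the $w_j$ that already belong to $S$ (possible since $S\subseteq S_{ud}$): with its columns listed in increasing order it is lower triangular with diagonal entries $\sqrt{J_m^\ast(x_i)}>0$, hence nonsingular, so $M$ has rank $n$ and $(S^{\ast\ast})_f$ is positive definite. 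Therefore $i_+((S^{\ast\ast})_f)=n$ and $i_-((S^{\ast\ast})_f)=i_0((S^{\ast\ast})_f)=0$.

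For part~(b), with $S$ UD-closed and $f(x)=x^m$ completely multiplicative and nowhere zero, \eqref{eq: gammap} gives $[S^{\ast\ast}]_f=\Lambda U\Gamma'U^T\Lambda=(\Lambda U)\Gamma'(\Lambda U)^T$, where $\Lambda=\diag(x_i^m)$ is nonsingular and, after ordering $S$ by size, $U$ is lower triangular with unit diagonal, hence nonsingular; thus $\Lambda U$ is nonsingular and $[S^{\ast\ast}]_f$ is ${}^T$congruent with $\Gamma'=\diag(\gamma_i')$. By Sylvester's law of inertia it remains to read off the signs of $\gamma_i'=B_{1/f}^\ast(x_i)=\phi_{1/f}^\ast(x_i)=J_{-m}^\ast(x_i)$, the last two equalities using UD-closedness. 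Since $J_{-m}^\ast(p^a)=p^{-am}-1<0$ for $a\ge 1$ and $J_{-m}^\ast$ is multiplicative, $J_{-m}^\ast(x_i)$ is a product of $\omega(x_i)$ negative factors, so it has sign $(-1)^{\omega(x_i)}$ and never vanishes. Hence $\gamma_i'>0$ precisely when $\omega(x_i)$ is even and $\gamma_i'<0$ precisely when $\omega(x_i)$ is odd, which is the claimed inertia.

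The computations are routine; the only points that require care are the passage to \emph{strict} positive definiteness in~(a) and the nonsingularity of $\Lambda U$ in~(b) --- both handled by listing the elements of $S$ in increasing order, so that the structural matrices $M$ (restricted to the appropriate columns) and $U$ become triangular with nonzero diagonal. I do not expect any obstacle beyond this bookkeeping, since the unitary setting is here even simpler than the Dirichlet one: a prime power has only two unitary divisors, so the totients $J_m^\ast$ and $J_{-m}^\ast$ have especially transparent prime-power values.
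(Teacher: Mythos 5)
Your proof is correct and follows essentially the same route as the paper: part (a) via the prime-power values $J_m^\ast(p^a)=p^{am}-1>0$ and the factorization \eqref{eq: M}, part (b) via \eqref{eq: gammap}, Sylvester's law, and the sign $(-1)^{\omega(x_i)}$ of $J_{-m}^\ast(x_i)$. The only difference is that you explicitly verify the rank-$n$ property of $M$ and the nonsingularity of $\Lambda U$ by a triangularity argument, details the paper leaves implicit.
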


\begin{proof}
 Proof is similar to that of Theorem \ref{th: inertiamS}. We, however, present the details.

(a) In this case
$$
(f\oplus\mu^\ast)(w_i)=J_m^{\ast}(w_i)=\sum_{d\Vert w_i} d^m \mu^\ast(w_i/d).
$$
The values of $J_m^{\ast}$ at prime powers are given as
$$
J_m^{\ast}(p^a)=p^{am}-1>0,
$$
and since $J_m^{\ast}$ is multiplicative, all the values of $J_m^{\ast}$ are positive.
Thus we may  apply Equation \eqref{eq: M}, which shows that $(S^{\ast\ast})_f$ is positive definite.
Thus all eigenvalues are positive.

(b) Factorization \eqref{eq: gammap} implies that $[S^{\ast\ast}]_f$ is
${}^T$congruent with the diagonal matrix $\Gamma^\prime$.
By Sylvester's law, see \cite[p. 223]{HJ}, ${}^T$congruence preserves inertia; hence it suffices to consider the diagonal matrix $\Gamma^\prime$, whose diagonal elements in the case of UD-closed set  are
$$
B_{1/f}^{\ast}(x_i)=((1/f)\oplus\mu^\ast)(x_i)
=J_{-m}^{\ast}(x_i)
=\sum_{d\Vert x_i} d^{-m} \mu^\ast(x_i/d).
$$
The values of $J_{-m}^{\ast}$ at prime powers in this case are given as
$$
J_{-m}^{\ast}(p^a)=p^{-am}-1<0.
$$
Since  in this case $J_{-m}^{\ast}$ is multiplicative,  we see that
$J_{-m}^{\ast}(x_i)$ is positive if and only if $\omega(x_i)$ is even, and
$J_{-m}^{\ast}(x_i)$ is negative if and only if $\omega(x_i)$ is odd.
This completes the proof.
\end{proof}

\begin{remark}
Theorem \ref{th: inertiam} holds for all multiplicative functions $f$ with
$f(p^a)>1$ for all prime powers $p^a>1$.
\end{remark}

\begin{theorem}\label{th: inertia-m}
Let $f(x)=x^{-m}$, where $m>0$.
\begin{itemize}

\item[\rm(a)]  If $S$ is  any set of $n$ distinct positive integers, then

$i_{+}([S^{\ast\ast}]_f)=n$, $i_{-}([S^{\ast\ast}]_f)=i_{0}([S^{\ast\ast}]_f)=0$.

\item[\rm(b)]  If $S$ is UD-closed, then

$i_{+}((S^{\ast\ast})_f)$ is the number of $x_i\in S$ such that $\omega(x_i)$ is even,

$i_{-}((S^{\ast\ast})_f)$ is the number of $x_i\in S$ such that $\omega(x_i)$ is odd,

$i_{0}((S^{\ast\ast})_f)=0$.
\end{itemize}

\end{theorem}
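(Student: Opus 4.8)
The plan is to follow the proof of Theorem~\ref{th: inertiam} almost verbatim, with the GCUD matrix $(S^{\ast\ast})_f$ and the pseudo-LCUM matrix $[S^{\ast\ast}]_f$ exchanging their roles: passing from $f(x)=x^m$ to $f(x)=x^{-m}$ simply interchanges $f$ and $1/f$, and $(x^m\oplus\mu^\ast)(p^a)=p^{am}-1>0$ while $(x^{-m}\oplus\mu^\ast)(p^a)=p^{-am}-1<0$. The tools needed are the factorizations \eqref{eq: Mp} and \eqref{eq: gamma}, the multiplicativity of the unitary Jordan totients $J_m^\ast$ and $J_{-m}^\ast$, and Sylvester's law of inertia.

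For part (a): since $f(x)=x^{-m}$ gives $(1/f)(x)=x^m$, we have $((1/f)\oplus\mu^\ast)(w_i)=J_m^\ast(w_i)$ with $J_m^\ast(p^a)=p^{am}-1>0$; as $J_m^\ast$ is multiplicative, $J_m^\ast(w_i)>0$ for every $w_i\in S_{ud}$. Because $f$ is completely multiplicative and never zero, factorization \eqref{eq: Mp} applies and yields $[S^{\ast\ast}]_f=M'(M')^T$. I would then note that $M'$ has full row rank $n$: keeping only the $n$ columns indexed by the unitary divisors $w_j=x_k\in S\subseteq S_{ud}$ and ordering $S$ increasingly produces a lower triangular matrix with nonzero diagonal entries $x_i^{-m}\sqrt{J_m^\ast(x_i)}$. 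Hence $[S^{\ast\ast}]_f$ is positive definite, so $i_{+}([S^{\ast\ast}]_f)=n$ and $i_{-}([S^{\ast\ast}]_f)=i_{0}([S^{\ast\ast}]_f)=0$.

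For part (b): assume $S$ is UD-closed (which in particular makes it GCUD-closed). Factorization \eqref{eq: gamma} gives $(S^{\ast\ast})_f=U\Gamma U^T$ with $\gamma_i=B_f^\ast(x_i)=\phi_f^\ast(x_i)=(f\oplus\mu^\ast)(x_i)=J_{-m}^\ast(x_i)$. Since $x_j\parallel x_i$ implies $x_j\mid x_i$, ordering $S$ increasingly makes $U$ lower triangular with unit diagonal, hence invertible, so $(S^{\ast\ast})_f$ is ${}^T$congruent to the diagonal matrix $\Gamma$, and by Sylvester's law the two have the same inertia. From $J_{-m}^\ast(p^a)=p^{-am}-1<0$ and multiplicativity, $J_{-m}^\ast(x_i)=\prod_{p^a\parallel x_i}(p^{-am}-1)$ is nonzero with sign $(-1)^{\omega(x_i)}$; therefore $i_{+}((S^{\ast\ast})_f)$ counts the $x_i$ with $\omega(x_i)$ even, $i_{-}((S^{\ast\ast})_f)$ counts those with $\omega(x_i)$ odd, and $i_{0}((S^{\ast\ast})_f)=0$.

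The argument is essentially bookkeeping; the two places that deserve a word of care are the passage from positive semidefiniteness to positive definiteness in part (a) --- i.e.\ verifying that the rectangular factor $M'$ has full row rank, via the triangular submatrix above --- and, in part (b), the fact that the unitary-divisor relation restricted to a UD-closed set ordered by size makes $U$ triangular and invertible, which is precisely what justifies invoking ${}^T$congruence and Sylvester's law. I expect no genuine obstacle beyond these.
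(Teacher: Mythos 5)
Your proof is correct and is exactly the adaptation the paper intends when it says the proof is ``similar to that of Theorem \ref{th: inertiam}'': part (a) via factorization \eqref{eq: Mp} with $((1/f)\oplus\mu^\ast)=J_m^\ast>0$, and part (b) via \eqref{eq: gamma}, Sylvester's law, and the sign $(-1)^{\omega(x_i)}$ of $J_{-m}^\ast(x_i)$. The extra care you take (full row rank of $M'$ via a triangular submatrix, invertibility of $U$) only makes explicit what the paper leaves implicit.
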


Proof is similar to that of Theorem \ref{th: inertiam}, and we omit the details.

\begin{remark}
Theorem \ref{th: inertia-m} holds for multiplicative functions $f$ with
$0\ne f(p^a)<1$ for all prime powers $p^a>1$.
\end{remark}

\section{Positive definite matrices} \label{se: pd}

Factorizations in Section  \ref{se: facto} and Sylvester's law of inertia make it possible to easily consider positive definiteness of GCD type matrices.

\begin{theorem}\label{th: pd-gcd}
If $S$ is GCD-closed and $f$ is any arithmetical function,
then $(S)_f$ is positive definite if and only if
$A_{f}(x_i)>0$ for all $i=1, 2,\ldots, n$.
\end{theorem}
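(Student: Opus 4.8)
The plan is to read off the conclusion directly from the factorization \eqref{eq: delta} together with Sylvester's law of inertia. Since $S$ is GCD-closed, that factorization gives $(S)_f = E\Delta E^T$, where $E=(e_{ij})$ is the incidence-type matrix with $e_{ij}=1$ precisely when $x_j\mid x_i$, and $\Delta = \diag(A_f(x_1),\dots,A_f(x_n))$. The first step is to observe that $E$ is invertible. After relabelling the elements of $S$ so that $x_j\mid x_i$ forces $j\le i$ (a linear extension of the divisibility order, which exists because divisibility is a partial order on the finite set $S$), the matrix $E$ becomes lower triangular with every diagonal entry equal to $e_{ii}=1$; hence $\det E = 1$, and in the original labelling $E$ differs from this by a simultaneous permutation of rows and columns, so $\det E = \pm 1 \ne 0$. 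Note that no hypothesis on $f$ is needed here: $f$ enters only through the diagonal matrix $\Delta$.

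Next, since $E$ is a real invertible matrix, the identity $(S)_f = E\Delta E^T$ exhibits $(S)_f$ as ${}^T$congruent to the diagonal matrix $\Delta$. By Sylvester's law of inertia \cite[p.~223]{HJ}, ${}^T$congruence preserves inertia, so $(S)_f$ and $\Delta$ have the same numbers of positive, negative and zero eigenvalues. In particular, $(S)_f$ is positive definite if and only if $\Delta$ is positive definite.

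Finally, a real diagonal matrix is positive definite if and only if all of its diagonal entries are positive. Since the diagonal entries of $\Delta$ are exactly $\delta_i = A_f(x_i)$ for $i=1,\dots,n$, it follows that $(S)_f$ is positive definite if and only if $A_f(x_i)>0$ for all $i=1,\dots,n$, which is the assertion. The only point I expect to require care is the invertibility of $E$, i.e.\ the remark that the divisibility order on $S$ admits a linear extension making $E$ (up to a simultaneous row/column permutation) unit lower triangular; once that is in place, the statement is immediate from the cited factorization and Sylvester's law.
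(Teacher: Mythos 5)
Your proof is correct and follows exactly the paper's argument: the factorization \eqref{eq: delta} plus Sylvester's law of inertia reduce positive definiteness of $(S)_f$ to that of the diagonal matrix $\Delta$. The only difference is that you spell out the invertibility of $E$ (unit lower triangular up to a simultaneous permutation), which the paper leaves implicit.
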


\begin{proof}
Factorization \eqref{eq: delta} and Sylvester's law show  that $(S)_f$ is positive definite if and only if  the diagonal matrix $\Delta$ is positive definite, which holds exactly when the diagonal elements $A_{f}(x_i)$ are positive.
\end{proof}

\begin{theorem}\label{th: pd-lcm}
If $S$ is GCD-closed and $f$ is a quasi-multiplicative function with $f(x)\ne 0$ for all $x$,
then $[S]_f$ is positive definite if and only if
$A_{1/f}(x_i)>0$ for all $i=1, 2,\ldots, n$.
\end{theorem}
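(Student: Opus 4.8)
The plan is to mirror the proof of Theorem \ref{th: pd-gcd} almost verbatim, since the structure is identical: we have a ${}^T$congruence factorization of $[S]_f$ into a diagonal matrix, and a real symmetric matrix is positive definite if and only if any matrix ${}^T$congruent to it is positive definite (Sylvester's law of inertia). First I would invoke the hypotheses: since $S$ is GCD-closed and $f$ is quasi-multiplicative with $f(x)\ne 0$ for all $x$, the factorization \eqref{eq: deltap} applies, giving $[S]_f=\Lambda E\Delta' E^T\Lambda$, where $\Lambda=\diag(f(x_1),\dots,f(x_n))$ is invertible (as $f(x_i)\ne 0$), $E$ is the unit lower-triangular-type incidence matrix of divisibility on $S$, and $\Delta'=\diag(A_{1/f}(x_1),\dots,A_{1/f}(x_n))$.

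Next I would observe that $E$ is invertible: because $S$ is GCD-closed, ordering the $x_i$ so that $x_i\mid x_j\Rightarrow i\le j$ makes $E$ lower triangular with all diagonal entries equal to $1$, hence $\det E=1$. Therefore $\Lambda E$ is an invertible matrix, and $[S]_f=(\Lambda E)\,\Delta'\,(\Lambda E)^T$ exhibits $[S]_f$ as ${}^T$congruent to $\Delta'$. By Sylvester's law of inertia \cite[p.~223]{HJ}, $[S]_f$ and $\Delta'$ have the same inertia; in particular $[S]_f$ is positive definite if and only if $\Delta'$ is positive definite. Finally, a diagonal matrix is positive definite precisely when all its diagonal entries are positive, so $[S]_f$ is positive definite if and only if $A_{1/f}(x_i)>0$ for all $i=1,2,\dots,n$, which is the claim.

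I do not anticipate a genuine obstacle here; the only point requiring a modicum of care is making explicit \emph{why} the congruence is by an \emph{invertible} matrix (Sylvester's law requires nonsingular congruence), which reduces to the invertibility of $\Lambda$ (immediate from $f(x_i)\ne 0$) and of $E$ (immediate from GCD-closedness via the triangular ordering). It is also worth remarking — though not strictly needed for the statement — that the quasi-multiplicativity of $f$ is exactly what guarantees that the diagonal form \eqref{eq: deltap} is valid in the first place, so the hypothesis cannot simply be dropped. With those observations in place the proof is a two-line application of factorization \eqref{eq: deltap} and Sylvester's law, entirely parallel to the proof of Theorem \ref{th: pd-gcd}.
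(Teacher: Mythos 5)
Your proof is correct and follows exactly the route the paper intends: the paper's own proof of this theorem is just the remark that it is ``similar to that of Theorem \ref{th: pd-gcd} and utilizes factorization \eqref{eq: deltap}'', which is precisely your argument via Sylvester's law applied to $[S]_f=(\Lambda E)\Delta'(\Lambda E)^T$. Your explicit verification that $\Lambda E$ is nonsingular is a welcome detail the paper leaves implicit.
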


Proof is similar to that of Theorem \ref{th: pd-gcd} and utilizes factorization \eqref{eq: deltap}.
We omit the details.

\begin{remark}
Theorems \ref{th: pd-gcd} and \ref{th: pd-lcm} are known for meet
and join matrices \cite{MH}.
\end{remark}

\begin{theorem}\label{th: pdmS}
Let $S$ be any set of $n$ distinct positive integers,
and let $f(x)=x^m$, where $m>0$. Then
\begin{itemize}

\item[\rm(a)] $(S)_f$ is positive definite,

\item[\rm(b)]  $[S]_f$ is indefinite for $n\ge 2$.
\end{itemize}

\end{theorem}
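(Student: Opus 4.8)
The plan for part (a) is simply to invoke Theorem~\ref{th: inertiamS}(a): for $f(x)=x^m$ with $m>0$ and \emph{any} set $S$ of $n$ distinct positive integers we already know $i_{+}((S)_f)=n$, i.e.\ $(S)_f$ is positive definite. No new work is needed.

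For part (b) note first that the factorizations of Section~\ref{se: facto} do not apply directly: \eqref{eq: deltap} needs $S$ to be GCD-closed, and in \eqref{eq: Np} the hypothesis $((1/f)\star\mu)(w_i)>0$ fails since $((1/f)\star\mu)=J_{-m}$ takes negative values at primes. So instead I would argue that $[S]_f$ is, for $n\ge 2$, neither positive semidefinite nor negative semidefinite; for a real symmetric matrix this is exactly the assertion that it has both a positive and a negative eigenvalue, i.e.\ that it is indefinite.

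The two halves go as follows. Since the diagonal entries of $[S]_f$ are $f(x_i)=x_i^m>0$, the matrix has positive trace and in particular is not negative semidefinite, so at least one eigenvalue is positive. For the other direction, using $n\ge 2$ pick distinct $x_i,x_j\in S$ and look at the $2\times 2$ principal submatrix
$$
\begin{pmatrix} x_i^m & [x_i,x_j]^m\\ [x_i,x_j]^m & x_j^m\end{pmatrix},
$$
with determinant $(x_ix_j)^m-[x_i,x_j]^{2m}$. The point is that $[x_i,x_j]^2>x_ix_j$ whenever $x_i\ne x_j$: writing $x_i=da$, $x_j=db$ with $d=(x_i,x_j)$ and $(a,b)=1$, one gets $[x_i,x_j]=dab$ and $[x_i,x_j]^2/(x_ix_j)=ab$, and coprime $a\ne b$ cannot both be $1$, so $ab\ge 2$. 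Hence this principal minor is negative, $[S]_f$ is not positive semidefinite, and therefore has a negative eigenvalue. Combining the two halves gives indefiniteness. (Equivalently, one can observe that the displayed $2\times 2$ block is itself indefinite and then invoke Cauchy's interlacing theorem to conclude that $[S]_f$ is indefinite for every $n\ge 2$.)

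I do not expect a genuine obstacle here. Part (a) is a citation; in part (b) the only substantive ingredient is the elementary inequality $[x_i,x_j]^2>x_ix_j$ for distinct positive integers, and the one thing to state carefully is the standard equivalence, for real symmetric matrices, between ``indefinite'' and ``neither positive nor negative semidefinite'', together with the fact that a single negative $2\times 2$ principal minor already rules out positive semidefiniteness of the whole matrix.
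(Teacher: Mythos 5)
Your proposal is correct and follows essentially the same route as the paper: part (a) is cited from Theorem~\ref{th: inertiamS}(a), and part (b) rests on the $2\times 2$ principal minor $x_i^m x_j^m-[x_i,x_j]^{2m}$ being negative while a diagonal entry (equivalently, the trace) is positive. You merely supply two details the paper leaves implicit, namely the verification that $[x_i,x_j]^2>x_ix_j$ for distinct $x_i,x_j$ and the precise logic deducing indefiniteness from the signs of these minors.
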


\begin{proof}
(a) This is shown in Theorem \ref{th: inertiamS} (a).

(b) The first leading principal minor of $[S]_f$ is $x_1^m>0$, and
the second leading principal minor is
$x_1^m x_2^m-([x_1, x_2])^{2m}<0$.
This shows that $[S]_f$ is indefinite for $n\ge 2$.
\end{proof}

\begin{remark}
Theorem \ref{th: pdmS}(a)  holds for all arithmetical functions $f$ with
 $(f\star\mu)(w_i)>0$ for all $w_i\in S_{d}$.
Theorem \ref{th: pdmS}(b) holds for all strictly increasing arithmetical functions $f$.
\end{remark}

\medskip

Bhatia \cite{Bh} says that a positive semidefinite matrix $H$ with $h_{ij}\ge 0$ for all $i, j$ is
\emph{infinitely divisible} if the $m$th Hadamard (or entrywise) power of $H$ is
positive semidefinite for all $m\ge 0$.

\begin{corollary} \label{co: (S)}
The matrix $(S)$ is infinitely divisible.
\end{corollary}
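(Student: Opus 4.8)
The plan is to observe that the $m$th Hadamard power of $(S)$ is again a GCD matrix, namely the one associated with the arithmetical function $f(x)=x^m$, and then to read off positive semidefiniteness from the factorization \eqref{eq: N}. First I would note that every entry of $(S)$ equals $(x_i,x_j)\ge 1$, so its entrywise powers are well defined and nonnegative and Bhatia's criterion is meaningful: we must show that the matrix $(S)^{\circ m}$ with $ij$ entry $(x_i,x_j)^m$ is positive semidefinite for every real $m\ge 0$.

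For the boundary case $m=0$ one uses the convention $(x_i,x_j)^0=1$, so $(S)^{\circ 0}$ is the all-ones matrix $\mathbf{1}\mathbf{1}^T$, which is positive semidefinite. For $m>0$ the point is simply that $(S)^{\circ m}=(S)_f$ with $f(x)=x^m$. Then $(f\star\mu)(w)=J_m(w)$, and since $J_m$ is multiplicative with $J_m(p^a)=p^{am}-p^{(a-1)m}=p^{(a-1)m}(p^m-1)>0$ at every prime power $p^a>1$ (and $J_m(1)=1$), all values of $J_m$ on the positive integers are positive. In particular $(f\star\mu)(w_i)>0$ for every $w_i$ in the divisor closure $S_d$ of $S$, so the hypothesis of \eqref{eq: N} is satisfied and $(S)_f=NN^T$ is positive semidefinite (indeed positive definite, cf.\ Theorem~\ref{th: pdmS}(a) and the Remark following it). Combining the two ranges of $m$, the matrix $(S)^{\circ m}$ is positive semidefinite for all $m\ge 0$, which is precisely the assertion that $(S)$ is infinitely divisible.

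I do not expect a genuine obstacle here: the statement is essentially immediate once the Hadamard power is identified with $(S)_{x^m}$. The only point requiring a word of care is that the exponent $m$ ranges over all nonnegative reals rather than integers, so one should remark that the convolution identity $(f\star\mu)=J_m$ and its prime-power evaluation — already used in the proof of Theorem~\ref{th: inertiamS}(a) — remain valid verbatim for real $m>0$, and that the $m=0$ case must be handled separately since $f(x)=x^0=1$ is not covered by the positivity of $J_m$ but gives the (positive semidefinite) all-ones matrix directly.
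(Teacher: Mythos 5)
Your proof is correct and follows exactly the route the paper intends: Corollary~\ref{co: (S)} is read off from Theorem~\ref{th: pdmS}(a) (equivalently, the factorization \eqref{eq: N} with $J_m>0$), with the observation that the argument is valid for all real exponents $m>0$ and that $m=0$ gives the all-ones matrix. Your explicit handling of the real-exponent and $m=0$ cases is a welcome bit of care that the paper leaves implicit.
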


\begin{remark}
Theorem \ref{th: pdmS}(a) is a known result \cite{BL93,BL93JNT}.
Corollary \ref{co: (S)} is also known \cite{Bh}.
Theorem \ref{th: pdmS}(b) is known for $m=1$ \cite{O}.
\end{remark}

\begin{theorem}\label{th: pd-mS}
Let $S$ be any set of $n$ distinct positive integers,
and let  $f(x)=x^{-m}$, where $m>0$. Then
\begin{itemize}

\item[\rm(a)] $[S]_f$ is positive definite,

\item[\rm(b)]  $(S)_f$ is indefinite for $n\ge 2$.

\end{itemize}
\end{theorem}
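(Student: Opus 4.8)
The plan is to handle the two parts separately, mirroring the proof of Theorem~\ref{th: pdmS} with the roles of GCD and LCM interchanged.

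For part~(a) I would simply invoke Theorem~\ref{th: inertia-mS}(a): for $f(x)=x^{-m}$ it gives $i_{+}([S]_f)=n$ and $i_{-}([S]_f)=i_{0}([S]_f)=0$, which is exactly the assertion that $[S]_f$ is positive definite. If a self-contained argument is preferred, note that $[x_i,x_j]^{-m}=x_i^{-m}x_j^{-m}(x_i,x_j)^m$, so $[S]_f=D\,(S)_g\,D$, where $D=\diag(x_1^{-m},\dots,x_n^{-m})$ is nonsingular and $g(x)=x^m$. Since $(S)_g$ is positive definite by Theorem~\ref{th: pdmS}(a) and congruence by a nonsingular matrix preserves positive definiteness, $[S]_f$ is positive definite.

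For part~(b) I would argue through the leading principal minors of $(S)_f$, exactly as in the proof of Theorem~\ref{th: pdmS}(b). After relabelling we may assume $x_1\neq x_2$. The first leading principal minor of $(S)_f$ is $f((x_1,x_1))=x_1^{-m}>0$, while the second is
\[
x_1^{-m}x_2^{-m}-(x_1,x_2)^{-2m}.
\]
The key point is that $(x_1,x_2)^2<x_1x_2$ whenever $x_1\neq x_2$: writing $d=(x_1,x_2)$, $x_1=da$, $x_2=db$ with $(a,b)=1$, the equality $d^2=d^2ab$ would force $ab=1$, hence $x_1=x_2$. Therefore $(x_1x_2)^m<(x_1,x_2)^{2m}$, so the second leading principal minor is negative. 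Consequently the $2\times2$ leading principal submatrix of $(S)_f$ has one positive and one negative eigenvalue, hence is neither positive nor negative semidefinite; since principal submatrices of semidefinite matrices are semidefinite, $(S)_f$ is indefinite for $n\ge2$.

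I expect essentially no genuine obstacle here; the only point needing (a trivial amount of) care is the strictness of $(x_1,x_2)^2<x_1x_2$, which is precisely where the hypothesis that the elements of $S$ are distinct is used. As with the remark following Theorem~\ref{th: pdmS}, one could note afterwards that part~(b) in fact holds for any strictly decreasing arithmetical function $f$.
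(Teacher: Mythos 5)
Your proposal is correct and is essentially the proof the paper intends: the paper omits the details, saying only that the argument is similar to that of Theorem~\ref{th: pdmS} and uses Theorem~\ref{th: inertia-mS}, and you do exactly that --- part~(a) by citing Theorem~\ref{th: inertia-mS}(a) (your alternative congruence argument $[S]_f=D\,(S)_g\,D$ is a nice self-contained substitute), and part~(b) by the sign of the second leading principal minor. One small slip: from $(x_1,x_2)^2<x_1x_2$ you should conclude $(x_1,x_2)^{2m}<(x_1x_2)^{m}$ (you wrote the reverse inequality); taking reciprocals then gives $(x_1x_2)^{-m}<(x_1,x_2)^{-2m}$, which is what makes the second minor $x_1^{-m}x_2^{-m}-(x_1,x_2)^{-2m}$ negative.
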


Proof is similar to that of Theorem \ref{th: pdmS} and utilizes
Theorem \ref{th: inertia-mS}. We omit the details.

\begin{remark}
Theorem \ref{th: pd-mS}(a) holds for all quasi-multiplicative functions with
$f(w_i)\ne 0$ and $((1/f)\star\mu)(w_i)>0$ for all $w_i\in S_{d}$.
Theorem \ref{th: pd-mS}(b) holds for all strictly decreasing arithmetical functions $f$.
\end{remark}

\begin{remark}
Theorem \ref{th: pd-mS}(a) is a known result \cite{BL95,HL}.
\end{remark}

\begin{corollary}
The Hadamard inverse of $[S]$ is infinitely divisible.
\end{corollary}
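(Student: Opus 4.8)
The plan is to recognize each Hadamard power of the Hadamard inverse of $[S]$ as an LCM matrix $[S]_f$ for a suitable $f$, and then quote Theorem~\ref{th: pd-mS}(a). Write $H$ for the Hadamard inverse of $[S]$, so that $h_{ij}=1/[x_i,x_j]$. All entries of $H$ are positive, which already takes care of the sign requirement in Bhatia's definition. For a real number $m\ge 0$ the $m$th Hadamard power $H^{\circ m}$ has $ij$ entry $1/[x_i,x_j]^m=[x_i,x_j]^{-m}$, which is precisely the $ij$ entry of $[S]_f$ with $f(x)=x^{-m}$. So the whole statement reduces to showing that $[S]_f$ is positive semidefinite for every real $m\ge 0$.

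First I would dispose of the degenerate case $m=0$: then $H^{\circ 0}$ is the all-ones matrix, which is positive semidefinite. For $m>0$ I would invoke Theorem~\ref{th: pd-mS}(a), which asserts that $[S]_f$ is positive definite for $f(x)=x^{-m}$; in particular it is positive semidefinite. One small point deserves an explicit remark: Theorem~\ref{th: pd-mS}(a) is phrased with an integer-looking exponent in mind, but its proof goes through factorization~\eqref{eq: Np}, which only uses that $1/f$ is quasi-multiplicative with $((1/f)\star\mu)(w_i)=J_m(w_i)>0$ on the divisor closure $S_{d}$. Since $J_m(p^a)=p^{am}-p^{(a-1)m}>0$ for every real $m>0$ and $J_m$ is multiplicative, this positivity holds for all real exponents $m>0$, not merely integers. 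Hence $H^{\circ m}$ is positive semidefinite for every real $m\ge 0$.

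Putting the two cases together, $H$ is a positive semidefinite matrix with nonnegative entries whose $m$th Hadamard power is positive semidefinite for all $m\ge 0$; by the definition recalled just before Corollary~\ref{co: (S)}, this says exactly that $H$ is infinitely divisible, which is the claim.

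I do not anticipate a real obstacle. The only thing to watch is that the corollary is meant for all real $m\ge 0$ while Theorem~\ref{th: pd-mS}(a) is stated for $f(x)=x^{-m}$ with integer notation, so one should point out that factorization~\eqref{eq: Np} remains valid for arbitrary real $m>0$ and handle $m=0$ by hand; everything else is a direct substitution.
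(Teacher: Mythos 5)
Your proposal is correct and follows exactly the route the paper intends: the corollary is an immediate consequence of Theorem \ref{th: pd-mS}(a) applied to $f(x)=x^{-m}$, with the observation that the argument via factorization \eqref{eq: Np} and the positivity of $J_m$ works for all real $m>0$, and the case $m=0$ giving the all-ones matrix. Your extra care about real versus integer exponents and the $m=0$ case is a welcome clarification of a point the paper leaves implicit.
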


\begin{theorem}\label{th: pd-gcud}
If $S$ is GCUD-closed and $f$ is any arithmetical function,
then $(S^{\ast\ast})_f$ is positive definite if and only if
$B^{*}_{f}(x_i)>0$ for all $i=1, 2,\ldots, n$.
\end{theorem}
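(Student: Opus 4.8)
The plan is to mirror the proof of Theorem~\ref{th: pd-gcd}, using the unitary factorization \eqref{eq: gamma} in place of \eqref{eq: delta}. Since $S$ is GCUD-closed, \eqref{eq: gamma} gives $(S^{\ast\ast})_f = U\,\Gamma\,U^T$, where $\Gamma = \diag(\gamma_1,\dots,\gamma_n)$ with $\gamma_i = B_f^{\ast}(x_i)$, and $U=(u_{ij})$ has $u_{ij}=1$ when $x_j \Vert x_i$ and $u_{ij}=0$ otherwise. The whole argument reduces to showing that $U$ is nonsingular, so that $(S^{\ast\ast})_f$ is ${}^T$congruent to the diagonal matrix $\Gamma$, and then reading off positive definiteness of a diagonal matrix from the signs of its entries.

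First I would verify that $U$ is invertible. Order the elements of $S$ so that $x_i < x_j$ implies $i < j$; reordering $S$ only conjugates $(S^{\ast\ast})_f$ by a permutation matrix, hence does not affect positive definiteness. If $x_j \Vert x_i$ with $j \ne i$, then $x_j$ is a proper unitary divisor of $x_i$, so $x_j < x_i$ and therefore $j < i$; thus $U$ is lower triangular. Its diagonal entries satisfy $u_{ii}=1$ because $x_i \Vert x_i$, so $\det U = 1$ and $U$ is nonsingular.

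Then $(S^{\ast\ast})_f = U\,\Gamma\,U^T$ with $U$ nonsingular exhibits $(S^{\ast\ast})_f$ as ${}^T$congruent to $\Gamma$. By Sylvester's law of inertia \cite[p.~223]{HJ}, ${}^T$congruent Hermitean matrices have the same inertia; in particular $(S^{\ast\ast})_f$ is positive definite if and only if $\Gamma$ is. A diagonal matrix is positive definite precisely when all of its diagonal entries are positive, that is, when $\gamma_i = B_f^{\ast}(x_i) > 0$ for all $i=1,\dots,n$, which is the asserted equivalence. I expect no real obstacle here: the only point beyond invoking \eqref{eq: gamma} and Sylvester's law is the triangularity and invertibility of $U$, which follows from the elementary fact that a proper unitary divisor of a positive integer is strictly smaller than it — exactly parallel to the ordinary-divisor fact underlying Theorem~\ref{th: pd-gcd}.
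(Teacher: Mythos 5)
Your proof is correct and follows the same route as the paper: factorization \eqref{eq: gamma} plus Sylvester's law of inertia reduce positive definiteness of $(S^{\ast\ast})_f$ to positivity of the diagonal entries $B_f^{\ast}(x_i)$ of $\Gamma$. Your explicit verification that $U$ is unit lower triangular (after ordering $S$ by size) and hence nonsingular is a detail the paper leaves implicit, but it is exactly the right justification and introduces nothing new in substance.
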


\begin{proof}
Factorization \eqref{eq: gamma} and Sylvester's law show  that $(S^{\ast\ast})_f$ is positive definite if and only if  the diagonal matrix $\Gamma$ is positive definite, which holds exactly when the diagonal elements $B^{*}_{f}(x_i)$ are positive.
\end{proof}

\begin{remark}
Theorem \ref{th: pd-gcud} is known for meet matrices \cite{MH}.
\end{remark}

\begin{theorem}\label{th: pd-lcum}
If $S$ is GCUD-closed and $f$ is a completely multiplicative function with $f(x)\ne 0$ for all $x$,
then $[S^{\ast\ast}]_f$ is positive definite if and only if
$B^{*}_{1/f}(x_i)>0$ for all $i=1, 2,\ldots, n$.
\end{theorem}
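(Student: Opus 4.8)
The plan is to mirror the proof of Theorem~\ref{th: pd-gcud} (and of Theorem~\ref{th: pd-lcm}), this time invoking the factorization~\eqref{eq: gammap} of the pseudo-LCUM matrix. Note first that $[S^{\ast\ast}]_f$ is real symmetric, since its $ij$ entry $f([x_i,x_j]^{\ast\ast})$ is symmetric in $i$ and $j$; hence its inertia is well defined and Sylvester's law of inertia applies.

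Since $f$ is completely multiplicative with $f(x)\ne 0$ for all $x$ and $S$ is GCUD-closed, formula~\eqref{eq: gammap} gives
$$
[S^{\ast\ast}]_f=\Lambda\,U\,\Gamma'\,U^T\,\Lambda,
$$
where $\Lambda=\diag(f(x_1),\dots,f(x_n))$, $U=(u_{ij})$ with $u_{ij}=1$ exactly when $x_j\parallel x_i$, and $\Gamma'=\diag(\gamma_1',\dots,\gamma_n')$ with $\gamma_i'=B_{1/f}^{\ast}(x_i)$. I would then check that $P:=\Lambda U$ is nonsingular: $\Lambda$ is invertible because each $f(x_i)\ne 0$, and $U$ is invertible because $S$ is UD-closed --- ordering the $x_i$ by increasing magnitude makes $U$ lower triangular (a proper unitary divisor of $x_i$ lying in $S$ is strictly smaller), and $u_{ii}=1$ since $x_i\parallel x_i$, so $\det U=1$. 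A reordering of $S$ only conjugates $[S^{\ast\ast}]_f$ by a permutation matrix, which affects neither symmetry nor positive definiteness, so this ordering may be assumed. Consequently $[S^{\ast\ast}]_f=P\,\Gamma'\,P^T$ with $P$ invertible, i.e. $[S^{\ast\ast}]_f$ is ${}^T$congruent to the diagonal matrix $\Gamma'$.

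By Sylvester's law of inertia \cite[p.~223]{HJ}, ${}^T$congruence preserves inertia, so $[S^{\ast\ast}]_f$ and $\Gamma'$ share the same numbers of positive, negative and zero eigenvalues. In particular $[S^{\ast\ast}]_f$ is positive definite if and only if $\Gamma'$ is, and a diagonal matrix is positive definite precisely when all of its diagonal entries are positive, i.e. precisely when $B_{1/f}^{\ast}(x_i)=\gamma_i'>0$ for every $i=1,\dots,n$. This yields the claim.

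Every step here is a direct appeal to~\eqref{eq: gammap} or a standard linear-algebra fact, so I expect no genuine difficulty. The one point needing care is the invertibility of $U$ (equivalently of $P$), which is exactly where the GCUD-closedness of $S$ enters, playing the role that GCD-closedness plays in Theorem~\ref{th: pd-lcm}; one should also keep in mind that~\eqref{eq: gammap} requires $f$ to be \emph{completely} multiplicative, not merely quasi-multiplicative, as recorded in Section~\ref{se: facto}.
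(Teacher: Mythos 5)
Your proof is correct and follows exactly the route the paper intends: it applies the factorization \eqref{eq: gammap} together with Sylvester's law of inertia, just as in the proof of Theorem \ref{th: pd-gcud}, with the invertibility of $\Lambda U$ supplying the ${}^T$congruence. (One small remark: the invertibility of $U$ needs only the increasing ordering of the $x_i$, not any closure hypothesis; the GCUD-closedness of $S$ is what makes the factorization \eqref{eq: gammap} itself valid.)
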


Proof is similar to that of Theorem \ref{th: pd-gcud} and utilizes factorization \eqref{eq: gammap}. We omit the details.

\begin{theorem}\label{th: pdm}
Let $S$ be any set of $n$ distinct positive integers,
and let $f(x)=x^m$, where $m>0$. Then
\begin{itemize}

\item[\rm(a)] $(S^{\ast\ast})_f$ is positive definite,

\item[\rm(b)] $[S^{\ast\ast}]_f$ is indefinite for $n\ge 2$.
\end{itemize}

\end{theorem}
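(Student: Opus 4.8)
The plan is to mirror the proof of Theorem~\ref{th: pdmS}, replacing divisors by unitary divisors throughout. Part~(a) needs no new argument: Theorem~\ref{th: inertiam}(a) already states that $i_{+}((S^{\ast\ast})_f)=n$ and $i_{-}((S^{\ast\ast})_f)=i_{0}((S^{\ast\ast})_f)=0$ for any set $S$ of $n$ distinct positive integers, and this is precisely the assertion that $(S^{\ast\ast})_f$ is positive definite.

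For part~(b) I would argue via leading principal minors. The $(1,1)$ entry of $[S^{\ast\ast}]_f$ is $f([x_1,x_1]^{\ast\ast})=f(x_1)=x_1^m>0$, so the first leading principal minor is positive; hence $[S^{\ast\ast}]_f$ will be indefinite as soon as I show that the second leading principal minor
$$
x_1^m x_2^m - \bigl(f([x_1,x_2]^{\ast\ast})\bigr)^2 = x_1^m x_2^m - \bigl([x_1,x_2]^{\ast\ast}\bigr)^{2m}
$$
is negative. Indeed, the $2\times 2$ leading block then has determinant $<0$, so it has one positive and one negative eigenvalue, and by Cauchy interlacing the full matrix $[S^{\ast\ast}]_f$ has at least one positive and at least one negative eigenvalue, i.e.\ it is indefinite whenever $n\ge 2$.

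The only point requiring a short computation is the inequality $[x_1,x_2]^{\ast\ast}>\sqrt{x_1x_2}$. Putting $g=(x_1,x_2)^{\ast\ast}$, we have $[x_1,x_2]^{\ast\ast}=x_1x_2/g$ by \eqref{eq: uulcm}, so the inequality is equivalent to $g^2<x_1x_2$. Since a unitary divisor is in particular an ordinary divisor, $g\mid x_1$ and $g\mid x_2$, hence $g^2\mid x_1x_2$; moreover $g^2=x_1x_2$ would force $x_1=x_2=g$, contradicting that the elements of $S$ are distinct. Therefore $g^2<x_1x_2$, so $\bigl([x_1,x_2]^{\ast\ast}\bigr)^{2m}=(x_1x_2/g^2)^m x_1^m x_2^m>x_1^m x_2^m$, the second leading principal minor is negative, and the proof is complete. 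I do not anticipate any genuine obstacle; the only thing to watch is the elementary divisibility fact about the GCUD used in the last step, together with the standard interlacing argument that upgrades ``not positive definite'' to ``indefinite''.
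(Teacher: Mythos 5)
Your proposal is correct and follows essentially the same route as the paper: part (a) by citing Theorem \ref{th: inertiam}(a), and part (b) by observing that the first two leading principal minors of $[S^{\ast\ast}]_f$ are $x_1^m>0$ and $x_1^m x_2^m-([x_1,x_2]^{\ast\ast})^{2m}<0$. The only difference is that you supply the justification (via $g=(x_1,x_2)^{\ast\ast}$ dividing both $x_1$ and $x_2$ with $g^2<x_1x_2$, plus interlacing) that the paper leaves implicit.
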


\begin{proof}
 (a) This is shown in Theorem \ref{th: inertiam}(a).

(b) The first leading principal minor of $[S^{\ast\ast}]_f$ is $x_1^m>0$,
and the second leading principal minor is
$x_1^m x_2^m-([x_1, x_2]^{\ast\ast})^{2m}<0$.
This shows that $[S^{\ast\ast}]_f$ is indefinite for $n\ge 2$.
\end{proof}

\begin{corollary}
The matrix $(S^{\ast\ast})$ is infinitely divisible.
\end{corollary}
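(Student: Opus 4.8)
The plan is to check the definition of infinite divisibility directly, reducing everything to the results already proved for power GCUD matrices. First I would record the trivial points: every entry of $(S^{\ast\ast})$ is a positive integer $(x_i,x_j)^{\ast\ast}\ge 1$, so the sign condition $h_{ij}\ge 0$ holds, and $(S^{\ast\ast})$ is positive semidefinite because it is in fact positive definite by Theorem~\ref{th: pdm}(a) (the case $m=1$). It then remains to show that for every real $m\ge 0$ the matrix with $ij$ entry $\bigl((x_i,x_j)^{\ast\ast}\bigr)^m$ --- the $m$th Hadamard power of $(S^{\ast\ast})$ --- is positive semidefinite.

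For $m=0$ this Hadamard power is the all-ones matrix, which is positive semidefinite. For $m>0$, note that $\bigl((x_i,x_j)^{\ast\ast}\bigr)^m = f\bigl((x_i,x_j)^{\ast\ast}\bigr)$ with $f(x)=x^m$, so the $m$th Hadamard power of $(S^{\ast\ast})$ is exactly $(S^{\ast\ast})_f$. The point I would stress is that the argument behind Theorems~\ref{th: inertiam}(a) and \ref{th: pdm}(a) never uses integrality of $m$: for any real $m$ the function $x\mapsto x^m$ is completely multiplicative, hence $f\oplus\mu^\ast$ is multiplicative, and at prime powers $(f\oplus\mu^\ast)(p^a)=J_m^\ast(p^a)=p^{am}-1>0$ as soon as $m>0$. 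Therefore $(f\oplus\mu^\ast)(w_i)>0$ for every $w_i$ in the unitary-divisor closure $S_{ud}$, and factorization~\eqref{eq: M} yields $(S^{\ast\ast})_f=MM^T$, which is positive semidefinite.

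Putting the two cases together shows that every Hadamard power of $(S^{\ast\ast})$ with nonnegative real exponent is positive semidefinite, so $(S^{\ast\ast})$ is infinitely divisible in the sense of Bhatia~\cite{Bh}. I do not expect a genuine obstacle: the only care needed is to observe that the positivity $p^{am}-1>0$ and the complete multiplicativity of $x^m$ survive for non-integer $m>0$, so that factorization~\eqref{eq: M} remains available across the whole range of exponents demanded by the definition; everything else is bookkeeping.
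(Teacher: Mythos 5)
Your proof is correct and follows exactly the route the paper intends: the $m$th Hadamard power of $(S^{\ast\ast})$ is $(S^{\ast\ast})_f$ with $f(x)=x^m$, which is positive (semi)definite for all real $m>0$ by the factorization \eqref{eq: M} (Theorem \ref{th: pdm}(a), whose hypothesis ``$m>0$'' already allows non-integer exponents), with the $m=0$ case being the all-ones matrix. The paper states the corollary without proof precisely because it is this immediate consequence; your write-up just makes the bookkeeping explicit.
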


\begin{remark}
Theorem \ref{th: pdm}(a)  holds for all arithmetical functions $f$ with
 $(f\oplus\mu^\ast)(w_i)>0$ for all $w_i\in S_{ud}$.
Theorem \ref{th: pdm}(b) holds for all strictly increasing arithmetical functions $f$.
\end{remark}

\begin{theorem}\label{th: pd-m}
Let $S$ be any set of $n$ distinct positive integers,
and let $f(x)=x^{-m}$, where $m>0$. Then
\begin{itemize}

\item[\rm(a)] $[S^{\ast\ast}]_f$ is positive definite,

\item[\rm(b)]  $(S^{\ast\ast})_f$ is indefinite for $n\ge 2$.

\end{itemize}

\end{theorem}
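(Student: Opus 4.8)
The plan is to mirror the proofs of Theorems \ref{th: pdmS} and \ref{th: pdm}, exploiting the reciprocal symmetry between GCD/GCUD and LCM/pseudo-LCUM matrices that interchanges the roles of $x^m$ and $x^{-m}$, together with the inertia computation already recorded in Theorem \ref{th: inertia-m}.

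For part (a) I would simply invoke Theorem \ref{th: inertia-m}(a): for $f(x)=x^{-m}$ with $m>0$ and an arbitrary set $S$, that statement gives $i_{+}([S^{\ast\ast}]_f)=n$ and $i_{-}([S^{\ast\ast}]_f)=i_{0}([S^{\ast\ast}]_f)=0$, i.e. every eigenvalue of $[S^{\ast\ast}]_f$ is positive, which is positive definiteness. Equivalently one can argue directly from factorization \eqref{eq: Mp}: the function $f(x)=x^{-m}$ is completely multiplicative and nowhere zero, and $(1/f)\oplus\mu^{\ast}=J_m^{\ast}$ satisfies $J_m^{\ast}(p^a)=p^{am}-1>0$ at every prime power, hence, being multiplicative, is positive on all of $S_{ud}$; thus $[S^{\ast\ast}]_f=M'(M')^T$ is positive semidefinite, and the standard rank argument (that $M'$ has full row rank $n$) upgrades this to positive definiteness. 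I expect the appeal to Theorem \ref{th: inertia-m}(a) to be the cleanest route.

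For part (b) I would compute the first two leading principal minors of $(S^{\ast\ast})_f$, exactly as in the proof of Theorem \ref{th: pdm}(b). The $(1,1)$ entry is $f((x_1,x_1)^{\ast\ast})=f(x_1)=x_1^{-m}>0$, so the first leading principal minor is positive, and the second leading principal minor is
$$
x_1^{-m}x_2^{-m}-\bigl((x_1,x_2)^{\ast\ast}\bigr)^{-2m}=(x_1x_2)^{-m}-\bigl((x_1,x_2)^{\ast\ast}\bigr)^{-2m}.
$$
Since $(x_1,x_2)^{\ast\ast}$ divides both $x_1$ and $x_2$, we have $(x_1,x_2)^{\ast\ast}\le\min(x_1,x_2)$, and because $x_1\ne x_2$ this yields $\bigl((x_1,x_2)^{\ast\ast}\bigr)^2\le\bigl(\min(x_1,x_2)\bigr)^2<x_1x_2$, hence $\bigl((x_1,x_2)^{\ast\ast}\bigr)^{-2m}>(x_1x_2)^{-m}$ and the second leading principal minor is negative. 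A positive first minor rules out negative semidefiniteness and a negative second minor rules out positive semidefiniteness; more concretely, the $2\times2$ leading principal submatrix has negative determinant, hence eigenvalues of opposite sign, so by Cauchy's interlacing theorem $(S^{\ast\ast})_f$ has at least one positive and at least one negative eigenvalue. Therefore $(S^{\ast\ast})_f$ is indefinite for $n\ge2$.

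There is no real obstacle here beyond bookkeeping; the one point that deserves care is the strict inequality $\bigl((x_1,x_2)^{\ast\ast}\bigr)^2<x_1x_2$, which genuinely uses that the elements of $S$ are \emph{distinct} (equality would be possible if $x_1=x_2$). As with Theorems \ref{th: pd-mS} and \ref{th: inertia-m}, the details could reasonably be omitted, but the argument above is the one I would write out.
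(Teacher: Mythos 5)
Your proof is correct and follows essentially the same route the paper intends: the paper omits the details, stating only that the argument mirrors Theorem \ref{th: pdm}, which for (a) invokes the inertia result (here Theorem \ref{th: inertia-m}(a)) and for (b) computes the first two leading principal minors. Your verification that the second minor is negative, reducing to $\bigl((x_1,x_2)^{\ast\ast}\bigr)^2<x_1x_2$ via distinctness of the $x_i$, is exactly the computation the paper leaves to the reader.
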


Proof is similar to that of Theorem \ref{th: pdm}. We omit the details.

\begin{corollary}
The Hadamard inverse of $[S^{\ast\ast}]$ is infinitely divisible.
\end{corollary}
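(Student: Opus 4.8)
The plan is to identify each Hadamard power of the Hadamard inverse of $[S^{\ast\ast}]$ with a pseudo-LCUM matrix already covered by Theorem~\ref{th: pd-m}. Write $H$ for the Hadamard inverse of $[S^{\ast\ast}]$, so that $h_{ij} = 1/[x_i, x_j]^{\ast\ast}$. Since $[x_i, x_j]^{\ast\ast} = x_i x_j/(x_i, x_j)^{\ast\ast}$ is a positive integer for all $i, j$, the entries $h_{ij}$ are well-defined and strictly positive, so Bhatia's nonnegativity hypothesis holds. For a real exponent $m \ge 0$, the $m$th Hadamard power $H^{\circ m}$ has $ij$ entry $\bigl([x_i, x_j]^{\ast\ast}\bigr)^{-m}$; that is, $H^{\circ m} = [S^{\ast\ast}]_g$ with $g(x) = x^{-m}$.

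First I would handle $m = 0$ separately: then $H^{\circ 0}$ is the $n\times n$ all-ones matrix, which is a rank-one Gram matrix and hence positive semidefinite. For $m > 0$ I would appeal to Theorem~\ref{th: pd-m}(a) (equivalently, to the factorization \eqref{eq: Mp} applied to $g(x) = x^{-m}$): the function $g$ is completely multiplicative with $g(w_i) \ne 0$, and $\bigl((1/g) \oplus \mu^\ast\bigr)(p^a) = p^{am} - 1 > 0$ for every prime power $p^a > 1$, so, this function being multiplicative, $(1/g) \oplus \mu^\ast$ is positive on the entire unitary divisor closure $S_{ud}$. Hence \eqref{eq: Mp} exhibits $H^{\circ m} = [S^{\ast\ast}]_g = M'(M')^T$ as a Gram matrix, which is positive semidefinite.

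Combining the two cases, $H^{\circ m}$ is positive semidefinite for every real $m \ge 0$, which is precisely the assertion that $H$ is infinitely divisible. I do not anticipate a genuine obstacle here; the only point deserving attention is that Bhatia's definition quantifies over all real $m \ge 0$, whereas Theorem~\ref{th: pd-m} is stated with a parameter written $m > 0$, so I would make explicit that the underlying factorization argument is insensitive to the integrality of the exponent — the quantities $p^{am} - 1$ that drive it are positive for every real $m > 0$ — so nothing is lost in passing to arbitrary real Hadamard powers.
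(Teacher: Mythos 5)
Your argument is correct and is exactly the route the paper intends: the $m$th Hadamard power of the Hadamard inverse of $[S^{\ast\ast}]$ is $[S^{\ast\ast}]_f$ with $f(x)=x^{-m}$, which is positive (semi)definite by Theorem~\ref{th: pd-m}(a) (via the factorization \eqref{eq: Mp}), and the case $m=0$ gives the all-ones matrix. Your added care about real exponents $m\ge 0$ is a sensible clarification but not a departure from the paper's approach.
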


\begin{remark}
Theorem \ref{th: pd-m}(a) holds for all completely multiplicative function with
$f(w_i)\ne 0$ and $((1/f)\oplus\mu^\ast)(w_i)>0$ for all $w_i\in S_{ud}$.
Theorem \ref{th: pd-m}(b) holds for all strictly decreasing arithmetical functions $f$.
\end{remark}

\section{$\ell_p$ norms}\label{se: norms}

Norms of GCD and LCM matrices have not been studied much in the literature.
Some results are obtained in \cite{Al,ATH,Bo,H1,H2,H3,So1,So2,Ta,Tu}.

In this section we provide asymptotic formulas for the $\ell_p$ norms of the GCD matrix $((i, j)_{n\times n})$,
the LCM matrix $([i, j]_{n\times n})$, the GCUD matrix $((i, j)_{n\times n}^{\ast\ast})$, the pseudo-LCUM matrix
$([i, j]_{n\times n}^{\ast\ast})$ and the matrix $((i, j)_{n\times n}^{\ast})$.
Here $(i, j)^{\ast}$ stands for the semi-unitary greatest common divisor (SUGCD),
being the greatest divisor of $i$ which is a unitary divisor of $j$. See, e.g., \cite{HaS}.
We utilize known asymptotic formulas for arithmetical functions.

Let $p\in\Z^+$. The $\ell_p$ norm of an $n\times n$ matrix $M$ is defined as
$$
\Vert M\Vert_p
=\left(\sum_{i=1}^n \sum_{j=1}^n |m_{ij}|^p \right)^{1/p}.
$$

\subsection{Norms of GCD matrices}

It is known that
\begin{equation*}
\sum_{i,j\le x} (i,j) = Ax^2\log x+Bx^2+O(x^{1+\theta+\epsilon})
\end{equation*}
for every $\epsilon>0$, where $A:=1/\zeta(2)$,
\begin{equation*}
B:=\frac{1}{\zeta(2)}\left(2\gamma-\frac{1}{2}
-\frac{\zeta(2)}{2}-\frac{\zeta'(2)}{\zeta(2)}\right)
\end{equation*}
and $\theta$ is the exponent in Dirichlet's divisor problem. (Here $\gamma$ is Euler's constant and $\zeta$ is the Riemann $\zeta$-function.)
This asymptotic formula is equivalent to that deduced in \cite{ChiSit1985} for the sum $\sum_{i\le j\le x} (i,j)$.
See also \cite{HilTot2016,Tot2010}.

This means that for $p=1$,
\begin{equation}
\Vert ((i, j))\Vert_1 = An^2\log n+Bn^2+O(n^{1+\theta+\epsilon}).
\end{equation}

In \cite{H1} it is shown a more rough result, namely
$$
\Vert ((i, j))\Vert_1=O(n^2\log n).
$$

\begin{theorem} \label{Th_gcd_2} Let $p\ge 2$ be a fixed integer. Then
\begin{equation} \label{asymp_gcd_p}
\sum_{i, j\le x} (i,j)^p = C_p x^{p+1}+O(E_p(x)),
\end{equation}
where
\begin{equation*}
C_p:=\frac{2\zeta(p)-\zeta(p+1)}{(p+1)\zeta(p+1)}
\end{equation*}
and $E_p(x)=x^p$ for $p>2$ and $E_2(x)=x^2\log x$.
\end{theorem}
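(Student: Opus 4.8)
The plan is to evaluate the double sum by the standard substitution $d=(i,j)$, write $i=da$, $j=db$ with $(a,b)=1$, and then reduce everything to elementary sums over hyperbola regions. Concretely, I would start from
\[
\sum_{i,j\le x}(i,j)^p=\sum_{d\le x}d^p\sum_{\substack{a,b\le x/d\\(a,b)=1}}1,
\]
and insert the Möbius identity $[(a,b)=1]=\sum_{e\mid(a,b)}\mu(e)$ to get, after another substitution $a=ea'$, $b=eb'$,
\[
\sum_{i,j\le x}(i,j)^p=\sum_{d\le x}d^p\sum_{e\le x/d}\mu(e)\left\lfloor\frac{x}{de}\right\rfloor^2 .
\]
The next step is to set $k=de$ and regroup: writing the inner arithmetic function as a Dirichlet convolution, $\sum_{k\le x}g(k)\lfloor x/k\rfloor^2$, where $g=\mathrm{Id}^p\star\mu$ in the multiplicative sense (so $g=J_p$, Jordan's totient, whose values at prime powers are $p^{ap}-p^{(a-1)p}>0$, exactly as already used in Theorem~\ref{th: inertiamS}).

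The core estimate is then $\sum_{k\le x}J_p(k)\lfloor x/k\rfloor^2 = x^2\sum_{k\le x}J_p(k)/k^2 + O\bigl(x\sum_{k\le x}J_p(k)/k\bigr)$, using $\lfloor x/k\rfloor^2=(x/k)^2+O(x/k)$. Since $J_p(k)\ll k^p$, the partial sums of $J_p(k)/k^2$ behave like $x^{p-1}$ for $p\ge 2$: more precisely I would prove $\sum_{k\le x}J_p(k)/k^2=\frac{x^{p-1}}{(p-1)}\cdot\frac{1}{\zeta(p+1)}\cdot\frac{\zeta(p)}{\zeta(p)}+\dots$ by partial summation from the Dirichlet series identity $\sum_{k\ge 1}J_p(k)k^{-s}=\zeta(s-p)/\zeta(s)$ evaluated near $s=p+1$. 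Carrying out partial summation carefully, the main term of $\sum_{k\le x}J_p(k)/k^2$ is $\frac{1}{(p-1)\zeta(p+1)}\,x^{p-1}$ plus a constant $C'_p$ (coming from the analytic continuation / the value $\zeta(p)/\zeta(p+1)$ of the Dirichlet series at $s=2$ when $p\ge 2$ — note for $p\ge 2$ the series $\sum J_p(k)/k^2$ diverges, so one really needs the $x^{p-1}$ growth term and the lower-order constant together), and then multiplying by $x^2$ gives the claimed $C_px^{p+1}$ once the constants are collected. The secondary sum $x\sum_{k\le x}J_p(k)/k\ll x\cdot x^p=x^{p+1}$ is too big as stated, so the error term must instead be extracted more carefully: one keeps $\lfloor x/k\rfloor^2 = (x/k)^2 - 2\{x/k\}(x/k) + \{x/k\}^2$ and bounds the fractional-part contributions, which for $p>2$ give $O(x^p)$ and for $p=2$ give the extra logarithm $O(x^2\log x)$. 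This is where the stated error terms $E_p(x)$ come from, and this is the step I expect to be the main obstacle: getting the fractional-part sums $\sum_{k\le x}J_p(k)\{x/k\}\cdot(x/k)$ down to $O(x^p)$ for $p>2$ (trivially $\ll x\sum_{k\le x}k^{p-1}\ll x^{p+1}$ is not enough, so one must either use cancellation in $\{x/k\}$ or, more simply, split the range at $k\asymp x$ and treat small $k$ and large $k$ separately — for $k$ near $x$ one has $J_p(k)\ll x^p$ and only $O(1)$ relevant values, while for small $k$ one uses $\sum_{k\le y}J_p(k)\ll y^{p+1}$ and partial summation).

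Finally I would assemble the constant: combining $\frac{1}{(p-1)\zeta(p+1)}$ from the leading power with the constant term of the partial-summation expansion, and simplifying using $\zeta(s-p)$ at $s=p+1$, should collapse to
\[
C_p=\frac{2\zeta(p)-\zeta(p+1)}{(p+1)\zeta(p+1)},
\]
the $2\zeta(p)$ reflecting the two contributions (the $x^{p-1}$-growth piece and the constant piece of $\sum J_p(k)/k^2$) and the $(p+1)$ in the denominator coming from integrating $x^{p-1}\cdot x^2$ conceptually, i.e. from the partial-summation normalization. An alternative, cleaner route to the constant — which I would use as a cross-check — is to note that $C_p=\sum_{d\ge 1}\frac{d^p}{d^{p+1}}\cdot\frac{1}{d}\cdot(\text{density of coprime pairs})\cdot\frac{1}{p+1}$ unwound as an absolutely convergent sum once one works with $\sum_{i\le j}$ and symmetrizes; the factor $2\zeta(p)-\zeta(p+1)$ then appears directly from $2\sum_{d}d^{-1}\cdot(\text{something})-(\text{diagonal correction})$. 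Either way, the heuristic $\sum_{i,j\le x}(i,j)^p\sim C_px^{p+1}$ with that explicit $C_p$ is forced, and the rigorous version is exactly the partial-summation computation sketched above together with the fractional-part error analysis.
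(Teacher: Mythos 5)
Your opening reduction is the same as the paper's starting point: $\sum_{i,j\le x}(i,j)^p=\sum_{k\le x}J_p(k)\lfloor x/k\rfloor^2$. But the step you flag as ``the main obstacle'' is in fact a genuine dead end, not just a technical nuisance. The term you hope to control, $2x\sum_{k\le x}J_p(k)\{x/k\}/k$, is \emph{not} an error term: since $J_p(k)(x/k)\asymp xk^{p-1}$ is largest for $k$ near $x$, and for $k\in(x/2,x]$ the fractional part $\{x/k\}=x/k-1$ has a positive mean (about $2\log 2-1$), there is no cancellation and this sum is genuinely $\asymp x^{p+1}$ --- the same order as the main term. Neither of your proposed rescues (cancellation in $\{x/k\}$, or splitting the range at $k\asymp x$) can work, because every dyadic block near $k\asymp x$ already contributes a positive multiple of $x^{p+1}$. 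You can see the discrepancy numerically: the $(x/k)^2$ piece alone gives main term $\frac{x^{p+1}}{(p-1)\zeta(p+1)}$ (e.g.\ $\approx 0.462\,x^{4}$ for $p=3$), whereas the claimed $C_p x^{p+1}$ is $\approx 0.305\,x^{4}$; the difference is exactly the main-term contribution of the fractional parts that you are trying to discard. Your ``cross-check'' derivation of $C_p$ is likewise heuristic and does not actually produce $2\zeta(p)-\zeta(p+1)$.

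The paper explicitly notes that summing over $d$ first only yields $S_p(x)=O(x^{p+1})$ with the main term absorbed by the error, and its key idea is to \emph{reverse the order of summation}: write $S_p(x)=\sum_{a,b\le x}\sum_{d\le x/M}J_p(d)$ with $M=\max(a,b)$, apply $\sum_{n\le y}J_p(n)=\frac{y^{p+1}}{(p+1)\zeta(p+1)}+O(y^p)$, and then evaluate the convergent sums $\sum_{a,b\le x}M^{-(p+1)}=2\zeta(p)-\zeta(p+1)+O(x^{1-p})$ and $\sum_{a,b\le x}M^{-p}\ll \log x$ (for $p=2$) or $\ll 1$ (for $p>2$). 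This is where the factor $2\zeta(p)-\zeta(p+1)$ and the error terms $E_p(x)$ actually come from. A salvageable version of your route would be to replace $\lfloor y\rfloor^2$ by the exact identity $\lfloor y\rfloor^2=2\sum_{j\le y}j-\lfloor y\rfloor$ and evaluate the resulting convolution sums by the hyperbola method, but that is essentially the paper's reversal of summation in disguise; as written, your argument does not close.
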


This formula can be obtained from general results of Cohen \cite{Coh1960i,Coh1962ii} established for sums $\sum_{a,b\le x} f((a,b))$, where
$f$ is a certain arithmetic function. However, we offer here an alternative approach to proof, which will be used for the next theorems, as well.

\begin{proof} Consider the Jordan function $J_p$.  We have
\begin{equation*}
S_p(x):= \sum_{i, j\le x} (i,j)^p = \sum_{i,j\le x} \sum_{d\mid (i,j)} J_p(d)= \sum_{\substack{da\le x\\ db\le x}} J_p(d).
\end{equation*}

Writing this into
\begin{equation*}
S_p(x)=  \sum_{d\le x } J_p(d) \left(\sum_{a\le x/d } 1 \right)^2,
\end{equation*}
and by applying usual estimates, we only obtain that $S_p(x)=O(x^{p+1})$, the main term being absorbed by the error term.
The idea is to change  the order of summation:
\begin{equation*}
S_p(x)= \sum_{a,b\le x} \sum_{d\le x/M} J_p(d),
\end{equation*}
where $M:=\max(a,b)$. By using the well known \cite[Th. 6.4]{Mc} formula
\begin{equation} \label{asympt_J_p}
\sum_{n\le x} J_p(n) = \frac1{(p+1)\zeta(p+1)}x^{p+1} +O(x^p),
\end{equation}
valid for any fixed $p\ge 2$, we obtain
\begin{equation*}
S_p(x)= \sum_{a,b\le x} \left(\frac1{(p+1)\zeta(p+1)} (x/M)^{p+1} +O((x/M)^p)\right)
\end{equation*}
\begin{equation} \label{1}
= \frac{x^{p+1}}{(p+1)\zeta(p+1)} \sum_{a,b\le x} \frac1{M^{p+1}} + O\left(x^p \sum_{a,b\le x} \frac1{M^p} \right).
\end{equation}

Here the first sum is
\begin{equation*}
\sum_{a,b\le x} \frac1{M^{p+1}} = 2 \sum_{a\le b\le x} \frac1{b^{p+1}}- \sum_{a=b\le x} \frac1{b^{p+1}}
\end{equation*}
\begin{equation*}
= 2 \sum_{b\le x} \frac1{b^{p+1}}\sum_{a\le b} 1 - \sum_{b\le x} \frac1{b^{p+1}}=
2 \sum_{b\le x} \frac1{b^p}- \sum_{b\le x} \frac1{b^{p+1}}
\end{equation*}
\begin{equation*}
= 2\left( \zeta(p) +O(\frac1{x^{p-1}})\right) - \left(\zeta(p+1) + O(\frac1{x^p})\right) = 2 \zeta(p)- \zeta(p+1) +O(\frac1{x^{p-1}}).
\end{equation*}

Similarly,
\begin{equation*}
\sum_{a,b\le x} \frac1{M^p} = 2 \sum_{a\le b\le x} \frac1{b^p}- \sum_{a=b\le x} \frac1{b^p} \ll \sum_{b\le x} \frac1{b^{p-1}},
\end{equation*}
which is $\ll \log x$ for $p=2$ and is $\ll 1$ for $p>2$,
see \cite[p. 70]{Ap}.
Inserting into \eqref{1} completes the proof (valid for any real $p\ge 2$).
\end{proof}

By applying Newton's generalized binomial theorem we obtain
$$
(\sum_{i, j\le x} (i,j)^p)^{1/p}=(C_px^{p+1})^{1/p}+O((x^{p+1})^{(1/p)-1}E_p(x))
$$
$$
=C_p^{1/p} x^{1+(1/p)}+O((x^{(1/p)-p}E_p(x)).
$$

Thus the $\ell_p$ norm of the $n\times n$ GCD matrix $((i, j))$ possesses the asymptotic formula
given in the next Corollary.

\begin{corollary} Let $p\ge 2$ be an integer. Then
\begin{equation}
\Vert ((i, j))\Vert_p=C_p^{1/p} n^{1+(1/p)}+O((n^{(1/p)-p}E_p(n)).
\end{equation}
\end{corollary}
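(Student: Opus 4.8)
The plan is to derive the asymptotic for $\Vert ((i,j))\Vert_p$ directly from Theorem~\ref{Th_gcd_2} via the elementary observation that
$$
\Vert ((i,j))\Vert_p = \left(\sum_{i,j\le n} (i,j)^p\right)^{1/p} = S_p(n)^{1/p},
$$
so the whole matter reduces to extracting the $p$-th root of the asymptotic expansion $S_p(n) = C_p n^{p+1} + O(E_p(n))$. First I would write $S_p(n) = C_p n^{p+1}\bigl(1 + R_n\bigr)$ with $R_n = O(E_p(n)/n^{p+1})$; since $E_p(n)/n^{p+1}$ is $O(n^{-1})$ for $p>2$ and $O(n^{-1}\log n)$ for $p=2$, in all cases $R_n\to 0$, so the expansion $(1+R_n)^{1/p} = 1 + \tfrac1p R_n + O(R_n^2)$ from Newton's generalized binomial theorem is legitimate for $n$ large. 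This is exactly the manipulation indicated in the paragraph preceding the Corollary in the excerpt.

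Next I would assemble the pieces: $S_p(n)^{1/p} = C_p^{1/p} n^{1+(1/p)}\bigl(1 + \tfrac1p R_n + O(R_n^2)\bigr)$, so that the error contribution is
$$
C_p^{1/p} n^{1+(1/p)} \cdot O(R_n) = O\!\left(n^{1+(1/p)} \cdot \frac{E_p(n)}{n^{p+1}}\right) = O\!\left(n^{(1/p)-p} E_p(n)\right),
$$
which is precisely the claimed error term; the $O(R_n^2)$ term is of smaller order and is absorbed. For concreteness this gives $O(n^{(1/p)-p}\cdot n^p) = O(n^{1/p})$ when $p>2$ and $O(n^{(1/p)-2}\cdot n^2\log n) = O(n^{1/p}\log n)$ when $p=2$, both genuinely smaller than the main term $C_p^{1/p} n^{1+(1/p)}$, confirming the formula is a true asymptotic. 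One should note $C_p>0$, so the real $p$-th root is unambiguous.

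There is essentially no obstacle here: the Corollary is a formal consequence of Theorem~\ref{Th_gcd_2} obtained by raising the asymptotic to the power $1/p$ and linearizing the root. The only point requiring a word of care is the justification that the binomial expansion applies, i.e. that the relative error $R_n$ is eventually less than $1$ in absolute value, which holds because $E_p(n) = o(n^{p+1})$ for every fixed integer $p\ge 2$; once that is in hand the error bookkeeping is routine. I would therefore present the proof in two short sentences: factor out the main term, apply $(1+R_n)^{1/p} = 1 + O(R_n)$, and read off the stated error term.
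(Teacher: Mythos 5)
Your proposal is correct and follows essentially the same route as the paper: the Corollary is obtained from Theorem~\ref{Th_gcd_2} by taking the $p$-th root via Newton's generalized binomial theorem, with the error term $O\bigl(n^{(1/p)-p}E_p(n)\bigr)$ read off exactly as you describe. Your added remarks on why the expansion is legitimate (namely $E_p(n)=o(n^{p+1})$) are a welcome but minor elaboration of what the paper leaves implicit.
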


In \cite{H1} it is shown a more rough result
$$
\Vert ((i, j))\Vert_p=O(n^{1+(1/p)})
$$
for $p\ge 2$.

\subsection{Norms of LCM matrices}

It is known that for every integer $p\ge 1$ one has
\begin{equation} \label{asymp_lcm_p}
\sum_{i, j\le x} [i, j]^p = D_p x^{2(p+1)}+O(x^{2p+1}(\log x)^{2/3}(\log\log x)^{4/3}),
\end{equation}
where
$$
D_p:=\frac{\zeta(p+2)}{(p+1)^2\zeta(p)},
$$
deduced in \cite [Th.\ 2]{IkeMat2014}. Applying Newton's generalized binomial theorem we obtain
$$
(\sum_{i, j\le n} [i, j]^p)^{1/p}
=(D_p n^{2(p+1)})^{1/p}+
O((n^{2(p+1)})^{(1/p)-1}n^{2p+1}(\log n)^{2/3}(\log\log n)^{4/3})
$$
$$
=D_p^{1/p} n^{2+(2/p)}+O((n^{(2/p)+1}(\log n)^{2/3}(\log\log n)^{4/3}).
$$

Thus the $\ell_p$ norm of the $n\times n$ LCM matrix $([i, j])$ possesses the asymptotic formula
\begin{equation}
\Vert ([i, j])\Vert_p=D_p^{1/p} n^{2+(2/p)}+O((n^{(2/p)+1}(\log n)^{2/3}(\log\log n)^{4/3})
\end{equation}
for $p\ge 1$.  In \cite{H1} it is shown a more rough result
$$
\Vert ([i, j])\Vert_p=O(n^{2+(2/p)})
$$
for $p\ge 1$.

In a similar way, having an asymptotic formula of type \eqref{asymp_gcd_p} or \eqref{asymp_lcm_p}, the
$\ell_p$ norm of the corresponding matrix can be easily estimated.

\subsection{Norms of SUGCD matrices}

Now consider the SUGCD matrix $((i, j)_{n\times n}^{\ast})$.

\begin{theorem}
\begin{equation*}
\sum_{i,j\le x} (i,j)^* = G x^2 \log x +O(x^2),
\end{equation*}
where
\begin{equation*}
G:= \zeta(2)^{-1} \prod_{q\in \PP} \left (1-\frac1{(q+1)^2} \right),
\end{equation*}
the product being over the primes $q$.
\end{theorem}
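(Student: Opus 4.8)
The plan is to collapse the double sum by first performing the inner sum over $i$ for fixed $j$, turning $\sum_{i,j\le x}(i,j)^*$ into a single sum of a multiplicative function of $j$. The structural input is the convolution identity $(i,j)^*=\sum_{d\mid i,\ d\parallel j}\phi^*(d)$, where $\phi^*=\id\oplus\mu^*$ is the unitary Euler function, so $\phi^*(p^a)=p^a-1$. Both sides of this identity factor over a coprime splitting of the pair $(i,j)$, so it suffices to verify it for prime powers: the divisors $d$ of $p^a$ that are unitary divisors of $p^b$ are $d=1$ when $b=0$ or $b>a$, and $d\in\{1,p^b\}$ when $1\le b\le a$, and summing $\phi^*$ over these gives $1$, $1$ and $1+(p^b-1)=p^b$ respectively, which is exactly $(p^a,p^b)^*$. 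In particular $\sum_{d\parallel j}\phi^*(d)=j$ (this is $\phi^*\oplus 1=\id$, since $\mu^*\oplus 1=\delta$).

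Summing over $i\le x$ and reversing the order of summation, and writing $\psi(t)=t-\lfloor t\rfloor\in[0,1)$, I would obtain
\[
\sum_{i\le x}(i,j)^*=\sum_{d\parallel j}\phi^*(d)\lfloor x/d\rfloor=x\sum_{d\parallel j}\frac{\phi^*(d)}{d}-\sum_{d\parallel j}\phi^*(d)\psi(x/d)=x\,h^*(j)+O(j),
\]
since the error has absolute value at most $\sum_{d\parallel j}\phi^*(d)=j$; here $h^*(j):=\sum_{d\parallel j}\phi^*(d)/d$ is multiplicative with $h^*(p^a)=1+(p^a-1)p^{-a}=2-p^{-a}$. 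Summing over $j\le x$ then gives
\[
\sum_{i,j\le x}(i,j)^*=x\sum_{j\le x}h^*(j)+O\Big(\sum_{j\le x}j\Big)=x\sum_{j\le x}h^*(j)+O(x^2),
\]
so the task reduces to proving $\sum_{j\le x}h^*(j)=Gx\log x+O(x)$.

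For the mean value of $h^*$, note $h^*(p^a)=2+O(1/p)$, so $h^*$ is a mild perturbation of the divisor function $\tau=1\star 1$. Writing $h^*=\tau\star\theta$ with $\theta:=h^*\star\mu\star\mu$, a short computation at prime powers gives $\theta(p)=-1/p$ and $\theta(p^a)=-(p-1)^2p^{-a}$ for $a\ge 2$, so the Dirichlet series $\sum_n\theta(n)n^{-s}=\zeta(s)^{-2}\sum_n h^*(n)n^{-s}$ has Euler factor $1-p^{-s-1}-(p-1)^2p^{-2(s+1)}(1-p^{-s-1})^{-1}=1-p^{-s-1}-p^{-2s}+\cdots$, which converges absolutely for $\Re s>1/2$; in particular $\sum_n|\theta(n)|/n<\infty$. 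Feeding Dirichlet's estimate $\sum_{m\le y}\tau(m)=y\log y+(2\gamma-1)y+O(\sqrt y)$ into $\sum_{j\le x}h^*(j)=\sum_{n\le x}\theta(n)\sum_{m\le x/n}\tau(m)$, and using the absolute convergence of $\sum\theta(n)/n$ and $\sum\theta(n)(\log n)/n$ together with $\sum_{n\le x}|\theta(n)|n^{-1/2}\le\sqrt x\sum_n|\theta(n)|/n=O(\sqrt x)$ and a power-saving bound on the tail $\sum_{n>x}\theta(n)/n$, I expect to arrive at $\sum_{j\le x}h^*(j)=x\log x\sum_{n\ge1}\theta(n)/n+O(x)$. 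Finally $\sum_{n\ge1}\theta(n)/n$ is the value at $s=1$ of the Euler product above, and the simplification $1-p^{-2}-\tfrac{p-1}{p^2(p+1)}=\tfrac{(p-1)(p+2)}{p(p+1)}$ identifies it with $\prod_{q\in\PP}\tfrac{(q-1)(q+2)}{q(q+1)}=\zeta(2)^{-1}\prod_{q\in\PP}\big(1-\tfrac1{(q+1)^2}\big)=G$.

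The elementary parts — the convolution identity, the collapse of the double sum, and the observation that the crude error $O(\sum_{j\le x}j)=O(x^2)$ is precisely what the statement allows — are routine. The main obstacle is the last step: isolating the $x\log x$ term of $\sum_{j\le x}h^*(j)$ with error only $O(x)$ and checking that its coefficient really is the constant $G$. I expect this to go through without difficulty, because $h^*$ differs from $\tau$ by a convolution with the rapidly decaying factor $\theta$ (Dirichlet series convergent for $\Re s>1/2$), so nothing deeper than Dirichlet's divisor estimate is required.
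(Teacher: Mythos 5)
Your proof is correct, but it takes a genuinely different route from the paper's. Both start from the identity $(i,j)^*=\sum_{d\mid i,\,d\parallel j}\phi^*(d)$, but the paper then keeps $d$ as the outer variable: it writes the double sum as $\sum_{d\le x}\phi^*(d)\,\lfloor x/d\rfloor\,\#\{b\le x/d:(b,d)=1\}$, applies the coprimality-counting lemma from [Tot1989, Lemma 2.1], and reduces everything to the partial sums $\sum_{d\le x}\phi(d)\phi^*(d)/d^3=G\log x+O(1)$, which it quotes from [Tot1989, Lemmas 2.2, 3.4]. You instead perform the $i$-summation first, collapsing the problem to the mean value of the multiplicative function $h^*(j)=\sum_{d\parallel j}\phi^*(d)/d$ with $h^*(p^a)=2-p^{-a}$, and then evaluate $\sum_{j\le x}h^*(j)$ by the convolution method against $\tau$, with the correction factor $\theta=h^*\star\mu\star\mu$ whose Dirichlet series converges for $\Re s>\tfrac12$. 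I checked the key computations: the local identity verifying the convolution formula, the values $\theta(p)=-1/p$ and $\theta(p^a)=-(p-1)^2p^{-a}$ for $a\ge2$, the tail and error estimates (Rankin's trick gives $\sum_{n>x}|\theta(n)|/n\ll x^{-1/2+\epsilon}$, so the $x\log x$ coefficient is captured with error $O(x)$), and the identification of the constant, $\prod_q\frac{(q-1)(q+2)}{q(q+1)}=\zeta(2)^{-1}\prod_q\bigl(1-\frac1{(q+1)^2}\bigr)=G$, which agrees with the Euler-product evaluation of the paper's constant. What your approach buys is self-containedness: nothing beyond Dirichlet's divisor estimate is imported, whereas the paper leans on two external lemmas. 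What the paper's approach buys is brevity and uniformity: the same ``sum over $d$ first, then cite the coprime-sum asymptotics'' template is reused for the GCUD and pseudo-LCUM theorems later in the section.
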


\begin{proof} We use that $d\mid\mid (i,j)^*$ if and only if $d\mid i$ and $d\mid\mid j$. By the property of the unitary Euler function
$J^*_1=\phi^*$,
\begin{equation*}
S^*(x):= \sum_{i,j\le x} (i,j)^* = \sum_{i,j\le x} \sum_{d\mid \mid (i,j)^*} \phi^*(d)
\end{equation*}
\begin{equation*}
= \sum_{i,j\le x} \sum_{\substack{d\mid i\\  d\mid \mid j}} \phi^*(d)
= \sum_{\substack{i=da\le x \\j=db\le x\\ (d,b)=1}} \phi^*(d)
\end{equation*}
\begin{equation*}
= \sum_{d\le x} \phi^*(d) \sum_{a\le x/d} 1 \sum_{\substack{b\le x/d\\ (b,d)=1}} 1.
\end{equation*}
Let $\sigma_s(n)= \sum_{d\mid n} d^s$. According to \cite[Lemma 2.1]{Tot1989}, for any fixed $k$ and any $\varepsilon>0$,
\begin{equation*}
\sum_{\substack{m\le x\\ (m,k)=1}} 1 = x \frac{\phi(k)}{k}+ O(x^{\varepsilon}\sigma_{-\varepsilon}(k)).
\end{equation*}
We deduce that
\begin{equation*}
S^*(x)= \sum_{d\le x} \phi^*(d) \left(\frac{x}{d}+ O(1)\right) \left(\frac{x}{d}\cdot \frac{\phi(d)}{d}+ O((\frac{x}{d})^{\varepsilon} \sigma_{-\varepsilon}(d)) \right)
\end{equation*}
\begin{equation*}
= x^2 \sum_{d\le x} \frac{\phi(d) \phi^*(d)}{d^3} +  O(x\sum_{d\le x} 1 )  + O(x^{1+\varepsilon}
\sum_{d\le x} \frac{\sigma_{-\varepsilon}(d)}{d^{\varepsilon}})
\end{equation*}
\begin{equation*}
= x^2 ( G\log x+ O(1)) + O(x^2) + O(x^{1+\varepsilon} x^{1-\varepsilon})
\end{equation*}
\begin{equation*}
= G x^2 \log x + O(x^2),
\end{equation*}
by using \cite[Lemmas 2.2, 3.4]{Tot1989}.
\end{proof}

\begin{remark}
Let
\begin{equation*}
P^*(n) =\sum_{k=1}^n (k,n)^*
\end{equation*}
be the unitary gcd-sum function. It is known (\cite[Th.\ 3.2]{Tot1989}) that
\begin{equation*}
\sum_{n\le x} P^*(n) = \frac{G}{2} x^2 \log x +O(x^2).
\end{equation*}
It follows that
\begin{equation*}
\sum_{m,n\le x} (m,n)^* \sim  2 \sum_{n\le x} P^*(n) \sim G x^2 \log x, \quad  x\to \infty.
\end{equation*}
On the other hand,
\begin{equation*}
\sum_{m,n\le x} (m,n)^* = \sum_{m\le n\le x} (m,n)^* + \sum_{n\le m\le x} (m,n)^* - \sum_{n\le x} (n,n)^*
\end{equation*}
\begin{equation*}
= \sum_{n\le x} P^*(n) + \sum_{m\le x} P^*_1(m) - \sum_{n\le x} n,
\end{equation*}
where
\begin{equation*}
P_1^*(n) =\sum_{k=1}^n (n,k)^*
\end{equation*}
is the ``dual'' unitary gcd-sum function. Hence
\begin{equation*}
\sum_{n\le x} P_1^*(n) \sim \frac{G}{2} x^2 \log x, \quad  x\to \infty.
\end{equation*}
\end{remark}

\begin{theorem} \label{Th_sugcd_2} Let $p\ge 2$ be a fixed integer. Then
\begin{equation*}
\sum_{i, j\le x} ((i,j)^*)^p = C^*_p x^{p+1}+O(E^*_p(x)),
\end{equation*}
where
\begin{equation*}
C^*_p:=\frac{\zeta(p+1)}{p+1} D^*_p \sum_{n=1}^{\infty} \frac{(n-1)g_p(n)+G_p(n)}{n^{p+1}},
\end{equation*}
\begin{equation} \label{D_t_star}
D^*_p:= \prod_{q\in \PP} \left(1-\frac2{q^{p+1}}+\frac1{q^{p+2}} \right),
\end{equation}
\begin{equation} \label{g_t}
g_p(n)= \prod_{q\mid n} \left(1-\frac1{q}\right) \left(1-\frac1{q^{p+1}}\right) \left(1-\frac2{q^{p+1}}+\frac1{q^{p+2}} \right)^{-1},
\end{equation}
the product being over the prime divisors $q$ of $n$,
\begin{equation*}
G_p(n)= \sum_{k=1}^n g_p(k),
\end{equation*}
and $E^*_p(x)=x^p$ for $p>2$ and $E^*_2(x)=x^2(\log x)^2$.
\end{theorem}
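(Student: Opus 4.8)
The plan is to run the scheme of the proof of Theorem~\ref{Th_gcd_2}, with the ordinary Jordan function replaced by its unitary analogue $J^*_p$ (so that $J^*_p(q^a)=q^{ap}-1$) and ordinary divisibility replaced by $\Vert$. Two convolution identities drive the computation: $x^p=\sum_{d\Vert x}J^*_p(d)$ and $J^*_p(d)=\sum_{e\Vert d}(d/e)^p\mu^*(e)$, both immediate from $\phi^*_f=f\oplus\mu^*$ with $f(x)=x^p$. Using the first of these together with the fact (already used for $p=1$) that $d\Vert(i,j)^*$ if and only if $d\mid i$ and $d\Vert j$, one gets $((i,j)^*)^p=\sum_{d\mid i,\ d\Vert j}J^*_p(d)$, hence
$$
S^*_p(x):=\sum_{i,j\le x}((i,j)^*)^p=\sum_{\substack{da\le x,\ db\le x\\(d,b)=1}}J^*_p(d).
$$
Summing over $d$ first only yields $S^*_p(x)=O(x^{p+1})$ with the main term swallowed by the error, exactly as in Theorem~\ref{Th_gcd_2}; one therefore changes the order of summation, setting $M:=\max(a,b)$:
$$
S^*_p(x)=\sum_{a,b\le x}\ \sum_{\substack{d\le x/M\\(d,b)=1}}J^*_p(d).
$$

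The core step is an asymptotic, uniform in $b$, for the inner sum. Writing $J^*_p=\mu^*\oplus f$ with $f(x)=x^p$ and inserting the elementary estimate $\sum_{c\le z,\ (c,k)=1}c^p=\tfrac{z^{p+1}}{p+1}\tfrac{\phi(k)}{k}+O(z^p2^{\omega(k)})$, one isolates the main contribution and obtains
$$
\sum_{\substack{d\le y\\(d,b)=1}}J^*_p(d)=\frac{\zeta(p+1)D^*_p}{p+1}\,g_p(b)\,y^{p+1}+O\!\left(y^p2^{\omega(b)}+y\right),
$$
where the constant and the arithmetic factor are pinned down by the Euler-product identities
$$
\sum_{e=1}^{\infty}\frac{\mu^*(e)\phi(e)}{e^{p+2}}=\prod_{q\in\PP}\left(1-\frac{q-1}{q(q^{p+1}-1)}\right)=\zeta(p+1)D^*_p
$$
with $D^*_p$ as in \eqref{D_t_star}, and $\frac{\phi(b)}{b}\sum_{(e,b)=1}\frac{\mu^*(e)\phi(e)}{e^{p+2}}=\zeta(p+1)D^*_p\,g_p(b)$, the latter because deleting the Euler factors at the primes $q\mid b$ and multiplying by $\phi(b)/b$ reproduces precisely the product defining $g_p(b)$ in \eqref{g_t}; convergence of the $e$-series for $p\ge2$ accounts for the tail error $O(y)$.

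Substituting back, the main term becomes $\frac{\zeta(p+1)D^*_p}{p+1}x^{p+1}\sum_{a,b\le x}\frac{g_p(b)}{M^{p+1}}$, and splitting the sum according to $a<b$, $a=b$, $a>b$ collapses it to $\sum_{n\le x}\frac{(n-1)g_p(n)+G_p(n)}{n^{p+1}}$. Since $0<g_p(n)\le1$ and hence $G_p(n)\le n$, this series converges as $x\to\infty$ to the factor occurring in $C^*_p$, with tail $O(x^{1-p})$; multiplied by $x^{p+1}$ this contributes $O(x^2)$. For the error terms, summing $(x/M)^p2^{\omega(b)}+x/M$ over $a,b\le x$ and using the standard bounds $\sum_{b\le x}2^{\omega(b)}\ll x\log x$, $\sum_{b\le x}2^{\omega(b)}/b\ll(\log x)^2$, together with the convergence of $\sum_b 2^{\omega(b)}b^{1-p}$ for $p>2$, yields $O(x^2(\log x)^2)$ when $p=2$ and $O(x^p)$ when $p>2$. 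Collecting the pieces gives $S^*_p(x)=C^*_px^{p+1}+O(E^*_p(x))$.

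The main obstacle will be the uniform-in-$b$ inner asymptotic: the dependence on $b$ must be kept explicit with an error strong enough to survive summation over $a,b\le x$ without overshooting $E^*_p(x)$, most delicately in the borderline case $p=2$, where the $2^{\omega(b)}$-weighted averages are precisely what forces the extra factor $(\log x)^2$; the Euler-product bookkeeping needed to recognise the constant $C^*_p$ in terms of $D^*_p$, $g_p$ and $G_p$ is routine but must be carried out carefully.
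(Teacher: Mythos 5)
Your proposal is correct and follows essentially the same route as the paper: the unitary convolution identity $((i,j)^*)^p=\sum_{d\mid i,\,d\Vert j}J^*_p(d)$, reversal of the order of summation with $M=\max(a,b)$, the coprimality-restricted asymptotic for $\sum_{d\le y,\,(d,b)=1}J^*_p(d)$ (which the paper states as a lemma with error $O(y^p\tau(b))$ and attributes to the method of \eqref{asympt_J_p} and Cohen, while you sketch its proof directly), and the same three-way split of $\sum_{a,b\le x}g_p(b)/M^{p+1}$ yielding the series $\sum_n((n-1)g_p(n)+G_p(n))/n^{p+1}$ and the error terms $E^*_p(x)$.
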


\begin{proof} We use the method of the proof of Theorem \ref{Th_gcd_2}, namely summation in reverse order.
Consider the unitary Jordan function $J^*_p$, already defined in Section \ref{se: inertia}. We have
\begin{equation*}
S^*_p(x):= \sum_{i, j\le x} ((i,j)^*)^p = \sum_{i,j\le x} \sum_{d\mid\mid (i,j)^*} J^*_p(d)
\end{equation*}
\begin{equation*}
= \sum_{i,j\le x}  \sum_{\substack{d\mid i\\ d\mid \mid j}} J^*_p(d)=
\sum_{\substack{da\le x\\ db\le x\\ (d,b)=1}} J^*_p(d)
\end{equation*}
\begin{equation*}
= \sum_{a,b\le x} \sum_{\substack{d\le x/M\\ (d,b)=1}} J^*_p(d),
\end{equation*}
where $M:=\max(a,b)$. We use the formula
\begin{equation} \label{form_unit_Jordan}
\sum_{\substack{n\le x\\(n,k)=1}} J^*_p(n) = \frac{\zeta(p+1)}{p+1} D^*_p x^{p+1} g_p(k)  + O(x^p\tau(k)),
\end{equation}
valid for any fixed $p\ge 2$, $k\in \Z^+$, where $\tau(k)=\sum_{d\mid k} 1$. The proof of \eqref{form_unit_Jordan} is similar to 
the proof of \eqref{asympt_J_p}. See also \cite{Coh1961}. Note that $0<g_p(k)<1$ holds for every $k\in \Z^+$.

We obtain
\begin{equation*}
S^*_p(x)= \sum_{a,b\le x} \left(\frac{\zeta(p+1)}{p+1}D^*_p g_p(b) (x/M)^{p+1} +O((x/M)^p\tau(b))\right)
\end{equation*}
\begin{equation} \label{2}
= x^{p+1} \frac{\zeta(p+1)}{p+1}D^*_p \sum_{a,b\le x} \frac{g_p(b)}{M^{p+1}} + O\left(x^p \sum_{a,b\le x} \frac{\tau(b)}{M^p} \right).
\end{equation}

Here
\begin{equation*}
\sum_{a,b\le x} \frac{g_p(b)}{M^{p+1}} =  \sum_{a\le b\le x} \frac{g_p(b)}{b^{p+1}} + \sum_{b\le a\le x} \frac{g_p(b)}{a^{p+1}}-
\sum_{a=b\le x} \frac{g_p(b)}{b^{p+1}} =:S_1+S_2-S_3,
\end{equation*}
say. We deduce
\begin{equation*}
S_1 = \sum_{b\le x} \frac{g_p(b)} {b^{p+1}} \sum_{a\le b} 1 = \sum_{b\le x} \frac{g_p(b)} {b^p} = \sum_{b=1}^{\infty} \frac{g_p(b)} {b^p}
+O(\frac1{x^{p-1}}),
\end{equation*}
the series being convergent since $0<g_p(k)<1$ for every $k\in \Z^+$.
\begin{equation*}
S_2=  \sum_{a\le x} \frac1{a^{p+1}} \sum_{b\le a}  g_p(b)=  \sum_{a\le x} \frac{G_p(a)}{a^{p+1}} = \sum_{a=1}^{\infty} \frac{G_p(a)}{a^{p+1}}
+O(\frac1{x^{p-1}}),
\end{equation*}
using that $0<G_p(k)<k$ for every $k\in \Z^+$.

Also,
\begin{equation*}
S_3=  \sum_{b=1}^{\infty} \frac{g_p(b)}{b^{p+1}} + O(\frac1{x^p}).
\end{equation*}

For the error term in \eqref{2},
\begin{equation*}
\sum_{a,b\le x} \frac{\tau(b)}{M^p} < \sum_{a\le b\le x} \frac{\tau(b)}{b^p} + \sum_{b\le a\le x} \frac{\tau(b)}{a^p} =
\sum_{b\le x} \frac{\tau(b)}{b^{p-1}} + \sum_{a\le x} \frac1{a^p} \sum_{b\le a} \tau(b)
\end{equation*}
\begin{equation*}
\ll \sum_{b\le x} \frac{\tau(b)}{b^{p-1}} + \sum_{a\le x} \frac{\log a}{a^{p-1}}
\end{equation*}
which is $\ll (\log x)^2$ for $p=2$ and is $\ll 1$ for $p>2$. Inserting into \eqref{2} completes the proof (valid for any real $p\ge 2$).
\end{proof}

\subsection{Norms of GCUD matrices}

Next consider the GCUD matrix $((i,j)_{n\times n}^{**})$.

\begin{theorem}
\begin{equation} \label{asympt_form}
\sum_{i,j\le x} (i,j)^{**} = F x^2 \log x +O(x^2),
\end{equation}
where
\begin{equation*}
F:= \zeta(2) \prod_{q\in \PP} \left (1-\frac{4}{q^2}+ \frac{4}{q^3} - \frac1{q^4} \right).
\end{equation*}
\end{theorem}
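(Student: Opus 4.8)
The plan is to reuse the ``summation in reverse order'' method already employed for $\sum_{i,j\le x}(i,j)^{p}$ and $\sum_{i,j\le x}(i,j)^{*}$, now with the coprimality condition present in both coordinates.

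\emph{Reduction to a divisor sum.} For $m,n\in\Z^{+}$ a positive integer $d$ satisfies $d\mid\mid m$ and $d\mid\mid n$ if and only if $d\mid\mid (m,n)^{**}$ (compare prime factorizations: $(m,n)^{**}$ is the product of those prime powers $q^{a}$ with $q^{a}\mid\mid m$ and $q^{a}\mid\mid n$). Combining this with $\sum_{d\mid\mid n}\phi^{*}(d)=n$, the property of $J^{*}_{1}=\phi^{*}$ already used above, gives
$$
(m,n)^{**}=\sum_{d\mid\mid (m,n)^{**}}\phi^{*}(d)=\sum_{\substack{d\mid\mid m\\ d\mid\mid n}}\phi^{*}(d).
$$
Writing $m=da$, $n=db$ with $(d,a)=(d,b)=1$ and interchanging the order of summation,
$$
S^{**}(x):=\sum_{i,j\le x}(i,j)^{**}=\sum_{d\le x}\phi^{*}(d)\,N(x/d,d)^{2},\qquad N(y,d):=\sum_{\substack{a\le y\\ (a,d)=1}}1.
$$

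\emph{Extraction of the main term.} Next I would substitute $N(x/d,d)=\tfrac{x}{d}\cdot\tfrac{\phi(d)}{d}+O\bigl((x/d)^{\varepsilon}\sigma_{-\varepsilon}(d)\bigr)$ from \cite[Lemma 2.1]{Tot1989} and expand the square. The diagonal part contributes the main term $x^{2}\sum_{d\le x}\phi^{*}(d)\phi(d)^{2}/d^{4}$, while the two remaining parts are, after using $\phi^{*}(d)\phi(d)/d^{2}\le 1$ and $\phi^{*}(d)\le d$,
$$
\ll x^{1+\varepsilon}\sum_{d\le x}\frac{\sigma_{-\varepsilon}(d)}{d^{\varepsilon}}+x^{2\varepsilon}\sum_{d\le x}d^{1-2\varepsilon}\sigma_{-\varepsilon}(d)^{2}.
$$
The first sum is $\ll x^{1-\varepsilon}$, so the first term is $O(x^{2})$; for the second I would use the mean value bound $\sum_{d\le x}\sigma_{-\varepsilon}(d)^{2}\ll_{\varepsilon}x$ (the Dirichlet series $\zeta(s)\zeta(s+\varepsilon)^{2}\zeta(s+2\varepsilon)/\zeta(2s+2\varepsilon)$ has only a simple pole at $s=1$) together with partial summation, obtaining $\ll x^{2}$. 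Hence, with any fixed $\varepsilon>0$,
$$
S^{**}(x)=x^{2}\sum_{d\le x}\frac{\phi^{*}(d)\phi(d)^{2}}{d^{4}}+O(x^{2}).
$$

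\emph{Evaluation of the remaining sum.} The function $h(d):=\phi^{*}(d)\phi(d)^{2}/d^{3}$ is multiplicative with $h(q^{a})=(1-q^{-a})(1-q^{-1})^{2}$ for $a\ge 1$; writing $h=1\star u$ one readily checks that $\sum_{n}|u(n)|n^{-1/2}<\infty$, and therefore, by the standard argument (cf.\ \cite[Lemmas 2.2, 3.4]{Tot1989}),
$$
\sum_{d\le x}\frac{\phi^{*}(d)\phi(d)^{2}}{d^{4}}=\sum_{d\le x}\frac{h(d)}{d}=F\log x+O(1),
$$
where, since $\sum_{a\ge 0}h(q^{a})q^{-a}=(q^{2}+2q-1)/(q(q+1))$,
$$
F=\prod_{q\in\PP}\Bigl(1-\frac1q\Bigr)\cdot\frac{q^{2}+2q-1}{q(q+1)}=\prod_{q\in\PP}\frac{(q-1)(q^{2}+2q-1)}{q^{2}(q+1)}.
$$
As $(q-1)^{2}(q^{2}+2q-1)=q^{4}-4q^{2}+4q-1$ and $\zeta(2)=\prod_{q\in\PP}q^{2}/((q-1)(q+1))$, this $F$ equals $\zeta(2)\prod_{q\in\PP}\bigl(1-4q^{-2}+4q^{-3}-q^{-4}\bigr)$, the constant in the statement. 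Multiplying the previous display by $x^{2}$ yields \eqref{asympt_form}.

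The main obstacle is controlling the error: a crude estimate of $N(x/d,d)^{2}$ loses logarithmic factors, so one genuinely needs the sharp remainder $O\bigl((x/d)^{\varepsilon}\sigma_{-\varepsilon}(d)\bigr)$ in the coprime-counting lemma together with the mean value bound for $\sigma_{-\varepsilon}(d)^{2}$ in order to keep the total error at $O(x^{2})$. Once that is settled, isolating the main term and simplifying the Euler product into the stated closed form is routine. (Alternatively, the asymptotic can be extracted from Cohen's general results on $\sum_{a,b\le x}f((a,b)^{**})$, cf.\ \cite{Coh1961}, but the argument sketched here is the one consistent with the other proofs in this section.)
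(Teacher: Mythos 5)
Your argument is correct, but it takes a genuinely different route from the paper. The paper's proof is a two-line reduction: it invokes the known asymptotic for the bi-unitary gcd-sum function $P^{**}(n)=\sum_{k=1}^n (k,n)^{**}$, namely $\sum_{n\le x}P^{**}(n)=\tfrac{F}{2}x^2\log x+O(x^2)$ from T\'oth (2009, Th.~3), and then uses the symmetry $\sum_{i,j\le x}(i,j)^{**}=2\sum_{j\le x}P^{**}(j)-\sum_{j\le x}j$. You instead give a self-contained derivation in the style of the paper's own proof for the SUGCD sum $\sum_{i,j\le x}(i,j)^{*}$: the identity $(m,n)^{**}=\sum_{d\parallel m,\,d\parallel n}\phi^{*}(d)$, the coprime-counting lemma with remainder $O((x/d)^{\varepsilon}\sigma_{-\varepsilon}(d))$, and the mean-value estimate for $\sum_{d\le x}\phi^{*}(d)\phi(d)^2 d^{-4}$ — your Euler-product computation does recover exactly the stated constant $F=\zeta(2)\prod_q(1-4q^{-2}+4q^{-3}-q^{-4})$, since $(q-1)^2(q^2+2q-1)=q^4-4q^2+4q-1$. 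Your route costs more work (in particular you correctly identify that the squared remainder forces the mean-value bound $\sum_{d\le x}\sigma_{-\varepsilon}(d)^2\ll_{\varepsilon}x$ rather than the lossier $\tau(d)^2$ bound), but it buys independence from the external reference and rederives $F$ from scratch; the paper's proof is shorter but opaque about where $F$ comes from. Both are valid.
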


\begin{proof} Let
\begin{equation*}
P^{**}(n) =\sum_{k=1}^n (k,n)^{**}
\end{equation*}
be the bi-unitary gcd-sum function. It is known (\cite[Th.\ 3]{Tot2009}) that
\begin{equation} \label{asympt_P_star_star}
\sum_{n\le x} P^{**}(n) = \frac{F}{2} x^2 \log x +O(x^2).
\end{equation}

We have
\begin{equation*}
\sum_{i, j\le x} (i, j)^{**} = 2 \sum_{i\le j\le x} (i, j)^{**} - \sum_{j\le x} (j, j)^{**}
\end{equation*}
\begin{equation*}
= 2 \sum_{j\le x} P^{**}(j) - \sum_{j\le x} j,
\end{equation*}
and \eqref{asympt_form} is a direct consequence of \eqref{asympt_P_star_star}.
\end{proof}

\begin{theorem} Let $p\ge 2$ be a fixed integer. Then
\begin{equation*}
\sum_{i, j\le x} ((i,j)^{**})^p = C^{**}_p x^{p+1}+O(E^{**}_p(x)),
\end{equation*}
where
\begin{equation*}
C^{**}_p:=\frac{\zeta(p+1)}{p+1} D^*_p \sum_{n=1}^{\infty} \frac{2\overline{G}_p(n) - g_p(n^2)}{n^{p+1}},
\end{equation*}
$D^*_p$ and $g_p(n)$ are defined by \eqref{D_t_star} and \eqref{g_t}, respectively,
\begin{equation*}
\overline{G}_p(n)= \sum_{k=1}^n g_p(kn),
\end{equation*}
and $E^{**}_p(x)=x^p \log x$ for $p>2$ and $E^{**}_2(x)=x^2(\log x)^3$.
\end{theorem}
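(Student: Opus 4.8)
The plan is to follow the reverse-summation method used for Theorems~\ref{Th_gcd_2} and \ref{Th_sugcd_2}; the only genuinely new ingredient is that the auxiliary divisor $d$ must now be a \emph{unitary} divisor of both $i$ and $j$. Using that $d\parallel(i,j)^{**}$ if and only if $d\parallel i$ and $d\parallel j$, and that $\sum_{d\parallel n}J^*_p(d)=n^p$ for the unitary Jordan function $J^*_p$, I would first write
\begin{equation*}
S^{**}_p(x):=\sum_{i,j\le x}((i,j)^{**})^p=\sum_{i,j\le x}\sum_{\substack{d\parallel i\\ d\parallel j}}J^*_p(d)=\sum_{\substack{da\le x,\ db\le x\\ (d,a)=1,\ (d,b)=1}}J^*_p(d).
\end{equation*}
Since $(d,a)=1$ and $(d,b)=1$ together amount to $(d,ab)=1$, reversing the order of summation (with $M:=\max(a,b)$) gives $S^{**}_p(x)=\sum_{a,b\le x}\sum_{d\le x/M,\ (d,ab)=1}J^*_p(d)$.

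Next I would insert the asymptotic formula \eqref{form_unit_Jordan} with $k=ab$ — exactly as $k=b$ was used in the proof of Theorem~\ref{Th_sugcd_2} — obtaining
\begin{equation*}
S^{**}_p(x)=x^{p+1}\,\frac{\zeta(p+1)}{p+1}\,D^*_p\sum_{a,b\le x}\frac{g_p(ab)}{M^{p+1}}+O\!\left(x^p\sum_{a,b\le x}\frac{\tau(ab)}{M^p}\right).
\end{equation*}
For the main double sum I would split according to $a\le b$, $b\le a$ and $a=b$ (subtracting the diagonal once). Because $g_p(ab)$ is symmetric in $a,b$, the two off-diagonal contributions coincide, and the inner sum collapses: $\sum_{a\le b}g_p(ab)=\overline{G}_p(b)$, while the diagonal yields $\sum_{a\le x}g_p(a^2)/a^{p+1}$. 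Using $0<g_p(n)<1$ (so $0<\overline{G}_p(n)<n$), the arising series converge for $p\ge 2$ with tails $O(x^{-(p-1)})$, and one finds $\sum_{a,b\le x}g_p(ab)/M^{p+1}=\sum_{n=1}^{\infty}\big(2\overline{G}_p(n)-g_p(n^2)\big)/n^{p+1}+O(x^{-(p-1)})$. Multiplying by $x^{p+1}\zeta(p+1)D^*_p/(p+1)$ produces the main term $C^{**}_p x^{p+1}$ together with an admissible error $O(x^2)$.

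For the error term I would bound $\tau(ab)\le\tau(a)\tau(b)$, split once more by $\max(a,b)$, and use $\sum_{a\le b}\tau(a)\ll b\log x$ to reduce to $\log x\sum_{b\le x}\tau(b)/b^{p-1}$, which is $\ll(\log x)^3$ for $p=2$ and $\ll\log x$ for $p>2$; combining with the $O(x^2)$ above gives $E^{**}_p(x)$ as stated. I expect the main obstacle to be bookkeeping rather than anything deep: one has to arrange the double sum $\sum_{a,b\le x}g_p(ab)/M^{p+1}$ so that the constant appears visibly as $C^{**}_p$ — in particular recognizing both off-diagonal pieces as $\overline{G}_p$ and the diagonal as the $g_p(n^2)$ term — and keep the two-variable divisor sum $\sum_{a,b\le x}\tau(ab)/M^p$ under control; once \eqref{form_unit_Jordan} is in hand, the rest parallels the proof of Theorem~\ref{Th_sugcd_2}.
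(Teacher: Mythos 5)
Your proposal is correct and follows essentially the same route as the paper's own proof: the reverse-summation reduction to $\sum_{a,b\le x}\sum_{d\le x/M,\,(d,ab)=1}J^*_p(d)$, the application of \eqref{form_unit_Jordan} with $k=ab$, the symmetric split of the main sum into the two $\overline{G}_p$ pieces and the $g_p(n^2)$ diagonal, and the bound $\tau(ab)\le\tau(a)\tau(b)$ for the error term all match. No substantive differences to report.
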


\begin{proof} Similar to the proofs of Theorems \ref{Th_gcd_2} and \ref{Th_sugcd_2}. We use that $d\mid\mid (i,j)^{**}$ if and only if
$d\mid\mid i$ and $d\mid\mid j$.
\begin{equation*}
S^{**}_p(x):= \sum_{i, j\le x} ((i,j)^{**})^p = \sum_{i,j\le x} \sum_{d\mid\mid (i,j)^{**}} J^*_p(d)
\end{equation*}
\begin{equation*}
= \sum_{i,j\le x}  \sum_{\substack{d\mid\mid i\\ d\mid \mid j}} J^*_p(d)=
\sum_{\substack{da\le x\\ db\le x\\ (d,ab)=1}} J^*_p(d)
\end{equation*}
\begin{equation*}
= \sum_{a,b\le x} \sum_{\substack{d\le x/M\\ (d,ab)=1}} J^*_p(d),
\end{equation*}
where $M:=\max(a,b)$. We use formula \eqref{form_unit_Jordan} and obtain that
\begin{equation*}
S^{**}_p(x)= \sum_{a,b\le x} \left(\frac{\zeta(p+1)}{p+1}D^*_p g_p(ab) (x/M)^{p+1} +O((x/M)^p\tau(ab))\right)
\end{equation*}
\begin{equation} \label{33}
= x^{p+1} \frac{\zeta(p+1)}{p+1}D^*_p \sum_{a,b\le x} \frac{g_p(ab)}{M^{p+1}} + O\left(x^p \sum_{a,b\le x} \frac{\tau(ab)}{M^p} \right).
\end{equation}

Here (having again symmetry in the variables $a$ and $b$),
\begin{equation*}
\sum_{a,b\le x} \frac{g_p(ab)}{M^{p+1}} =  2 \sum_{a\le b\le x} \frac{g_p(ab)}{b^{p+1}} -
\sum_{a=b\le x} \frac{g_p(b^2)}{b^{p+1}}
\end{equation*}
\begin{equation*}
= 2 \sum_{b\le x} \frac{\overline{G}_p(b)}{b^{p+1}}- \sum_{b\le x} \frac{g_p(b^2)}{b^{p+1}} = 2 \sum_{b=1}^{\infty}
\frac{\overline{G}_p(b)}{b^{p+1}} - \sum_{b=1}^{\infty} \frac{g_p(b^2)}{b^{p+1}} + O(\frac1{x^{p-1}}).
\end{equation*}

For the error term in \eqref{33}, use that $\tau(ab)\le \tau(a)\tau(b)$ for any $a,b\in \Z^+$.
\begin{equation*}
\sum_{a,b\le x} \frac{\tau(ab)}{M^p} \ll \sum_{a\le b\le x} \frac{\tau(a)\tau(b)}{b^p} =\sum_{b\le x} \frac{\tau(b)}{b^p}\sum_{a\le b} \tau(a)
\end{equation*}
\begin{equation*}
\ll \sum_{b\le x} \frac{\tau(b)}{b^p} b\log b \ll (\log x) \sum_{b\le x} \frac{\tau(b)}{b^{p-1}},
\end{equation*}
which is $\ll (\log x)^3$ for $p=2$ and is $\ll \log x$ for $p>2$, see \cite[p. 70]{Ap}. The proof works for any real $p\ge 2$.
\end{proof}

\subsection{Norms of pseudo-LCUM matrices}

Finally, consider the pseudo-LCUM matrix $([i,j]^{**}_{n\times n})$.

\begin{theorem} Let $p\ge 1$ be an integer. Then
\begin{equation*}
\sum_{i, j\le x} ([i,j]^{**})^p = \frac{\beta_p}{(p+1)^2} x^{2(p+1)}+O(x^{2p+1}(\log x)^2),
\end{equation*}
where
\begin{equation*}
\beta_p:=\zeta(2)\zeta(p+2) \prod_{q\in \PP} \left(1-\frac{2}{q^2} + \frac{2}{q^3} - \frac1{q^4} - \frac{2}{q^{p+3}}+ \frac{2}{q^{p+4}} \right).
\end{equation*}
\end{theorem}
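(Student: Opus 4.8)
The plan is to follow the pattern of the proofs of Theorems~\ref{Th_gcd_2} and \ref{Th_sugcd_2}, starting from the pseudo-LCUM identity $[i,j]^{**}=ij/(i,j)^{**}$. First I would expand the reciprocal power by unitary M\"obius inversion. With $J^*_{-p}:=\id^{-p}\oplus\mu^*$ one has $J^*_{-p}(q^a)=q^{-ap}-1$ at prime powers and $n^{-p}=\sum_{d\mid\mid n}J^*_{-p}(d)$; since $d\mid\mid(i,j)^{**}$ iff $d\mid\mid i$ and $d\mid\mid j$, writing $i=da$, $j=db$ with $(d,a)=(d,b)=1$ (so $(ij)^p=d^{2p}(ab)^p$) gives
\[
T_p^{**}(x):=\sum_{i,j\le x}\bigl([i,j]^{**}\bigr)^p=\sum_{d\le x}J^*_{-p}(d)\,d^{2p}\,U_p(x/d,d)^2,\qquad U_p(y,d):=\sum_{\substack{a\le y\\(a,d)=1}}a^p,
\]
the terms with $d>x$ being zero. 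Note that, unlike in Theorem~\ref{Th_gcd_2}, summing in the direct order already suffices here, because $J^*_{-p}$ is bounded.

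Second, I would record the elementary estimate $U_p(y,d)=\dfrac{y^{p+1}}{p+1}\cdot\dfrac{\phi(d)}{d}+O\!\bigl(y^p2^{\omega(d)}\bigr)$ for $y\ge1$, obtained from $U_p(y,d)=\sum_{e\mid d}\mu(e)e^p\sum_{c\le y/e}c^p$ together with $\sum_{c\le z}c^p=\frac{z^{p+1}}{p+1}+O(z^p)$, the implied constant uniform in $d$ since $\sum_{e\mid d}\mu^2(e)=2^{\omega(d)}$. Put $M(y,d)=\frac{y^{p+1}}{p+1}\frac{\phi(d)}{d}$. The essential trick in the error analysis is to use $U_p^2-M^2=(U_p-M)(U_p+M)$ rather than to expand $(U_p-M)^2$: combining $|U_p-M|\ll y^p2^{\omega(d)}$ with the trivial bound $0\le U_p(y,d)+M(y,d)\ll y^{p+1}$ gives $|U_p(y,d)^2-M(y,d)^2|\ll y^{2p+1}2^{\omega(d)}$, and this is what keeps the final error at $(\log x)^2$ instead of $(\log x)^3$.

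Third, the main term $\sum_{d\le x}J^*_{-p}(d)d^{2p}M(x/d,d)^2$ equals $\dfrac{x^{2(p+1)}}{(p+1)^2}\sum_{d\le x}\dfrac{J^*_{-p}(d)\phi(d)^2}{d^4}$; since $|J^*_{-p}(d)|\le1$ and $\phi(d)\le d$, the tail $\sum_{d>x}$ of the latter sum is $O(1/x)$, so it may be completed to $d=\infty$ at the cost of $O(x^{2p+1})$. The resulting series is multiplicative, and I would compute its $q$-th Euler factor and multiply by $(1-q^{-2})(1-q^{-(p+2)})$; this should collapse exactly to $1-2q^{-2}+2q^{-3}-q^{-4}-2q^{-(p+3)}+2q^{-(p+4)}$, so that $\sum_{d\ge1}J^*_{-p}(d)\phi(d)^2/d^4=\zeta(2)\zeta(p+2)\prod_q\bigl(1-2q^{-2}+2q^{-3}-q^{-4}-2q^{-(p+3)}+2q^{-(p+4)}\bigr)=\beta_p$. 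For the error I would insert $|U_p^2-M^2|\ll y^{2p+1}2^{\omega(d)}$ with $y=x/d$, obtaining $\ll x^{2p+1}\sum_{d\le x}|J^*_{-p}(d)|2^{\omega(d)}/d\ll x^{2p+1}\sum_{d\le x}2^{\omega(d)}/d\ll x^{2p+1}(\log x)^2$. Adding the two contributions yields the stated asymptotic, valid in fact for any real $p\ge1$.

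The genuinely delicate points are the two just flagged: the Euler-product evaluation of $\beta_p$, a routine but somewhat lengthy local computation where an arithmetical slip is easiest to make, and keeping the error bookkeeping tight enough to reach $(\log x)^2$ rather than the $(\log x)^3$ one gets from a careless splitting of $U_p^2$.
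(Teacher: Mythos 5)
Your proposal is correct and follows essentially the same route as the paper's proof: unitary M\"obius inversion of $n^{-p}$ via $h_p=\mu^*\oplus\id_{-p}$ (your $J^*_{-p}$), the substitution $i=da$, $j=db$ with $(d,ab)=1$, the coprime power-sum estimate $\sum_{a\le y,\,(a,d)=1}a^p=\frac{y^{p+1}}{p+1}\frac{\phi(d)}{d}+O(y^p\tau(d))$, completion of the multiplicative series to the Euler product $\beta_p$, and the bound $\sum_{d\le x}\tau(d)/d\ll(\log x)^2$. If anything you are slightly more careful than the paper at the squaring step, where your factorization $U_p^2-M^2=(U_p-M)(U_p+M)$ justifies the error $O(y^{2p+1}\tau(d))$ that the paper asserts directly (a naive expansion of the square would contribute an extra $\sum_{d\le x}\tau(d)^2$ term of order $x^{2p+1}(\log x)^3$).
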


\begin{proof} Let $\id_s(n)=n^s$.
\begin{equation*}
U_p(x):= \sum_{i, j\le x} ([i,j]^{**})^p = \sum_{i, j\le x} \left(\frac{ij}{(i,j)^{**}}\right)^p =
\sum_{i,j\le x} (ij)^p \sum_{d\mid\mid (i,j)^{**}} (\mu^* \oplus \id_{-p})(d).
\end{equation*}

Let denote $h_p(n)=(\mu^* \oplus \id_{-p})(n)$, which is multiplicative and $h_p(q^\nu)=1/q^{\nu p}-1$ for every prime power
$q^\nu$ ($\nu \ge 1$). Hence $|h_p(n)|\le 1$ for every $n\in \Z^+$ (and every real $p>0$). We have
\begin{equation*}
U_p(x)=  \sum_{\substack{da\le x\\ db\le x\\ (d,ab)=1}} (d^2ab)^p h_p(d) = \sum_{d\le x} d^{2p} h_p(d)
\left( \sum_{\substack{a\le x/d\\ (a,d)=1}} a^p\right)^2.
\end{equation*}

We use the known \cite[Lemma 2.1]{Tot1989} formula
\begin{equation*}
\sum_{\substack{n\le x\\ (n,k)=1}} n^p = \frac{x^{p+1}}{p+1} \frac{\phi(k)}{k}+ O(x^p\tau(k)),
\end{equation*}
valid for every real $p\ge 0$ and $k\in \Z^+$, and obtain
\begin{equation*}
U_p(x)= \sum_{d\le x} d^{2p} h_p(d) \left(\frac{(x/d)^{p+1}}{p+1}\cdot \frac{\phi(d)}{d} + O((x/d)^p\tau(d))\right)^2
\end{equation*}
\begin{equation*}
= \sum_{d\le x} d^{2p} h_p(d) \left(\frac{(x/d)^{2(p+1)})}{(p+1)^2}\cdot \frac{\phi^2(d)}{d^2} + O((x/d)^{2p+1}\tau(d))\right)
\end{equation*}
\begin{equation} \label{last}
= \frac{x^{2(p+1)}}{(p+1)^2} \sum_{d\le x} \frac{h_p(d)\phi^2(d)}{d^4} + O\left(x^{2p+1} \sum_{d\le x} \frac{|h_p(d)|\tau(d)}{d}
\right).
\end{equation}

The sum of the main term in \eqref{last} can be written as
\begin{equation*}
\sum_{d=1}^{\infty} \frac{h_p(d)\phi^2(d)}{d^4}+O(\frac1{x})=\beta_p+O(\frac1{x}),
\end{equation*}
the series being convergent since $h_p(n)$ is bounded, where $\beta_p$ can be easily computed by the Euler product formula.
The error term in \eqref{last} is $\ll x^{2p+1}(\log x)^2$, see \cite[p. 70]{Ap}.  Note that
the proof is valid for any positive real~$p$.
\end{proof}

%%%%%%%%%%%%%%%%%%%%%%%%%%%%%%%%%%%%%%%%%%%%%%%%%%%%%%%%%%%%%%%%%%%%%%%%%%%%%%%
%%%%%%%%%%%%%%%%%%%%%%%%%%%%%%%%%%%%%%%%%%%%%%%%%%%%%%%%%%%%%%%%%%%%%%%%%%%%%%%%

\end{document}